\newenvironment{proof}{ {\it Proof.} }{\hfill{\it{QED}}\medskip}
\selectfont\symbol{60}\fontencoding{\encodingdefault}}
\selectfont\symbol{62}\fontencoding{\encodingdefault}}
\selectfont\symbol{124}\fontencoding{\encodingdefault}}
\newcommand{\assign}{:=}
\newcommand{\mathd}{\mathrm{d}}
\newcommand{\nocomma}{}
\newcommand{\tmop}[1]{\ensuremath{\operatorname{#1}}}
\newcommand{\nobracket}{}
\newcommand{\tmdummy}{$\mbox{}$}
\newcommand{\br}{{\bf r}}
\newcommand{\bs}{{\bf s}}
\newcommand{\bX}{{\bf X}}
\newcommand{\bY}{{\bf Y}}
\newcommand{\bx}{{\bf x}}
\newcommand{\by}{{\bf y}}
\newcommand{\cD}{\mathcal{D}}
\newcommand{\w}{\omega}
\newcommand{\N}{{\mathbb N}}
\newcommand{\R}{\mathbb{R}}
\newcommand{\C}{\mathbb{C}}
\newtheorem{theorem}{Theorem}[section]
\newtheorem{lemma}[theorem]{Lemma}
\newtheorem{remark}[theorem]{Remark}
\begin{document}

\begin{frontmatter}
  \title{A linear dimensionless bound for the weighted Riesz vector}
  
  \author[IMT]{Komla Domelevo}  \ead{komla.domelevo@math.univ-toulouse.fr}
  \author[IMT]{Stefanie Petermichl\thanksref{SP}\thanksref{fn1}}  \ead{stefanie.petermichl@math.univ-toulouse.fr}
  \author[WC]{Janine Wittwer} \ead{jwittwer@westminstercollege.edu}

  \thanks[SP]{Research supported in part by ANR-12-BS01-0013-02. The author is a member of IUF}
  \thanks[fn1]{Correponding author, Tel:+33 5 61 55 76 59, Fax: +33 5 61 55 83 85}
  \address[IMT]{Universit\'e Paul Sabatier, \\Institut de Math\'ematiques de Toulouse, \\118 route de Narbonne, \\F-31062 Toulouse, France}
  \address[WC]{Westminster College, \\1840 South 1300 East, \\Salt Lake City, UT 84105, USA}
  
  \begin{abstract}
We show that the norm of the vector of Riesz transforms as operator in the
weighted Lebesgue space $L^2_{\w}$  is
bounded by a constant multiple of the first power of the Poisson-$A_2$ characteristic of $\w$. The bound is free of dimension and optimal. Our argument requires an extension of Wittwer's linear estimate for martingale transforms to the vector valued setting with scalar weights, for which we indicate a proof. Extensions to $L^p_{\w}$ for $1<p<\infty$ are discussed.   Our arguments to exhibit sharpness at the critical exponent $p=2$ require a martingale extrapolation theorem, for which we provide a proof. We also show that for $n>1$, the Poisson-$A_2$ class is properly included in the classical $A_2$ class.       
\end{abstract}

  \begin{keyword}
   Bellman function \sep  Riesz transforms \sep weighted estimates, background noise
  \end{keyword}

\end{frontmatter}

\section{Introduction}
A weight is a positive $L^1_{loc}$ function. Muckenhoupt proved in \cite{Mu} that for $1<p< \infty$ the maximal function is bounded on $L^p_{\w}$ iff the weight $\w$ belongs to the class $A_p$, where 
\[ \w \in A_p \text{ iff } Q_p(\w) := \sup_{B}\langle \w \rangle_B \langle \w^{-1/(p-1)}\rangle_B^{p-1} \, < \infty.\]
Here the notation $\langle \cdot \rangle_B$ denotes the average over the ball $B$ and the supremum runs over all balls $B$. Hunt, Muckenhoupt, Wheeden proved in \cite{HMW} that the $A_p$ condition also characterizes the boundedness of the Hilbert transform 
\[Hf(x)= \frac1{\pi}\int \frac{f(y)}{x-y}dy\]
in $L^p_{\w}$. The extension of this theory to general Calder\'{o}n-Zygmund operators was done by Coifman and Fefferman in \cite{CF}.

One often sees the restriction to $p=2$ when working with weights. It stems from the availability of a theory of extrapolation initiated by Rubio de Francia \cite{RdF}. 

Quantitative norm estimates for these operators in dependence on $Q_p(\w)$ or $Q_2(\w)$ in particular, have attracted considerable interest. The linear and optimal bound in terms of the classical $A_2$ characteristic $Q_2(\w)$ has been established for the Hilbert transform by one of the authors in \cite{P2} and then in \cite{P3} for the higher dimensional case and Riesz transforms. In \cite{DGPP} it has been observed that linear, sharp estimates in the case $p=2$ for operators such as Riesz transforms or Haar multipliers extrapolate via Rubio de Francia's theorem to optimal constants for other $p$. The same upper estimates hold for all 
Calder\'{o}n-Zygmund operators, which was first shown in \cite{Hy}. See also \cite{HPTV}. This remarkable result has been reproven  by Lerner \cite{Ler2013a} using a completely different approach. The bound depends upon the dimension in all these proofs. 

The focus in this note is on the Riesz vector in weighted spaces $L^2_{\w}$ and the norm dependence on dimension as well as quantities related to $Q_2(\w)$. We are interested in a version of $A_2$ (and $A_p$) which is particularly well-suited for working with the Riesz transforms in $\R^n$, where we exploit the intimate connection of Riesz transforms and harmonic functions. Namely, we use the Poisson-$A_2$ class with characteristic $\tilde{Q}_2(\w)$, which considers Poisson averages instead of box averages in the definition of $A_2$. This allows us to obtain a bound free of dimension for the Riesz vector $\vec{R}$:
$$\|\vec{R}\|_{L^2_{\w}\to L^2_{\w}}\lesssim \tilde{Q}_2(\w).$$

The Poisson-$A_2$ characteristic arises naturally from the viewpoint of martingales driven by space-time Brownian motion as in Gundy-Varopoulos \cite{GV}: the Riesz transforms of a function can be written as conditional expectation of a simple transformation of a martingale associated to the function. The Poisson-$A_2$ class is adapted to the stochastic process considered (see \cite{IK}) and for this reason, we can find a simple, structural proof that is independent of dimension. 

Our argument is deterministic, using transference principle through Bellman functions, where convexity is replaced by harmonicity. This was also the approach in \cite{NT} as well as \cite{PW} for the weighted Hilbert transform and \cite{VD} for unweighted Riesz transforms. 


Interestingly,  one-dimensional Poisson extensions of weights made a reappearance in the works concerned with the famous two-weight problem for the Hilbert transform, see \cite{Lac2013a}, \cite{LacSawSheUri2012a} and \cite{Hy2014}. It enjoys its interpretation as a `tamed' Hilbert transform, a feature that appears to be lost in higher dimensions. In the one-dimensional case, we see a quadratic relation between the Poisson characteristic and the classical characteristic, but the classes themselves are the same. Interestingly, these different $A_2$ classes are not identical when the dimension is larger. We will show examples of $A_2$ weights whose Poisson integral diverges when the dimension is at least two. Such weights belong to $A_2$ but not to Poisson-$A_2$. This shows that the Poisson characteristic used on a pair of weights such as for the two-weight problems, is not necessary in higher dimension. This is one of several obstacles when considering the two-weight question for the Riesz transforms, that is currently under investigation. We mention the recent advance  \cite{LacWic2013a} where the Poisson characteristic is modified. 

To see sharpness, the Buckley examples can be used for any $1<p<2$. This is in a contrast to the classical case, where they work directly for $1<p\le 2$ and one then uses duality to reach the remaining $p$. So, in the Poisson case, there remains a gap at $p=2$. Previous texts claiming otherwise use a reference that contains an arithmetic error. The authors are indebted to the anonymous referee for finding this mistake. It has lead to an interesting detour to show sharpness at the exponent 2, through a martingale representation of the Hilbert transform and a martingale extrapolation theorem, whose proof (very similar to the known proof for sublinear operator) we sketch.

\section{Notation}

The Riesz transforms $R_k$ in $\R^n$ are the component operators  of the Riesz vector $\vec{R}$, defined on the Schwartz class by 
\[\hat{(R_k f)}  (\xi) = i\frac{\xi_k}{\|\xi\|}\hat f (\xi).\]
We consider the space $L^2_{\w}$, where $\w$ is a positive scalar valued $L^1_{\rm loc}$ function, 
called a weight. More specifically, the space $L^2_{\w}(\R^n;\C)$ consists of all measurable functions $f:\R^n \to \C$ so that the quantity
\[\|f\|_{\w} := \left(\int_{\R^n}|f(x)|^2 \w(x) dx\right)^{1/2}\] is finite, where $dx$ denotes the Lebesgue measure on $\R^n$. For the space of vector valued functions $L^2_{\w}(\R^n;\C^n)$, we replace 
$|\cdot |$ by the $\ell^2$ norm $\|\cdot \|$.

We are concerned with a special class of weights, called Poisson-$A_2$. We say $\w \in \tilde{A}_2$  if 
\begin{equation}\label{apdef}
\tilde{Q}_2(\w) := \sup_{(x,t)\in \R^n\times \R_+} P_t(\w)(x)P_t(\w^{-1})(x) <\infty
\end{equation} 
where $P_t$ denotes the Poisson extension operator into the upper half space defined by 
\[P_t = e^{-tA}\] where we define $A:=\sqrt{-\Delta}$ and where $\Delta$ is the Laplacian in $\R^n$. The scalar Riesz transforms can be written as \[R_k=\partial_k \circ A^{-1}.\]
The Poisson kernel has the form \[P_t(y)=c_n\frac{t}{(t^2+|y|^2)^{\frac{n+1}2}}\] where $c_n$ is its normalizing factor. The extension operator becomes \[P_tf(x)=\int_{\R^n}f(y)P_t(x-y)dy.\]

\section{Main results}\label{mainres}

The main purpose of this text is to provide the dimensionless estimate:
\begin{theorem} \label{lbd}
There exists a constant $c$ that does not depend on the dimension $n$ or on the weight $\w$ so that for all weights $\w \in \tilde{A}_2$ the Riesz vector  as an operator in weighted space $L^2_{\w} \to L^2_{\w}$ has operator norm $\| \vec{R}\|_{L^2_{\w} \to L^2_{\w}} \le c \tilde{Q}_2(\w).$
\end{theorem}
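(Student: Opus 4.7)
My plan is to execute a Bellman function transference argument that passes from a vector-valued dyadic analogue of Wittwer's linear $A_2$ bound \cite{W} to the continuous Riesz setting, in the same spirit as the Hilbert transform argument in \cite{PW}. First I would represent the bilinear form as a Littlewood--Paley-type integral over the upper half-space. Using $R_k=\partial_k A^{-1}$ and $\partial_t P_t=-A P_t$ together with the subordination identity $1=\int_0^\infty 4t|\xi|^2 e^{-2t|\xi|}\,dt$ on the Fourier side, one obtains, for scalar $f\in L^2_{\w}$ and vector valued $\vec g\in L^2_{\w^{-1}}(\R^n;\C^n)$,
$$\langle \vec Rf,\vec g\rangle = -4\int_0^\infty\!\!\int_{\R^n} t\, \nabla_x P_t f(x)\cdot \partial_t P_t\vec g(x)\,dx\,dt,$$
so the theorem reduces to bounding this bilinear space-time integral by $c\,\tilde Q_2(\w)\|f\|_{\w}\|\vec g\|_{\w^{-1}}$ with $c$ independent of the dimension $n$.

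Next I would look for a Bellman function $B$ on the convex domain
$$\Omega_Q=\{(X,\vec Y,u,v)\in \C\times\C^n\times \R_+\times \R_+: uv\le Q\},\qquad Q=\tilde Q_2(\w),$$
satisfying a size bound of the form $0\le B\le cQ(|X|^2 u^{-1}+\|\vec Y\|^2 v^{-1})$ and a Hessian inequality $-d^2 B\ge c^{-1}(uv)^{-1/2}|dX|\,\|d\vec Y\|$ designed to dominate the integrand in the identity above. Evaluated at
$$\Phi_t(x)=\bigl(P_t(f\w)(x),\ P_t(\vec g\,\w^{-1})(x),\ P_t\w(x),\ P_t\w^{-1}(x)\bigr),$$
the four-vector stays in $\Omega_Q$ by the $\tilde A_2$ condition $P_t\w\cdot P_t\w^{-1}\le Q$, and each of its coordinates is harmonic on $\R^n\times\R_+$. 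Applying $t(\Delta_x+\partial_t^2)$ to $B\circ\Phi_t$ cancels the linear part; what remains is the Hessian pairing of the four gradients, which by construction dominates the Littlewood--Paley integrand. Integration over the upper half-space and the boundary size bound at $t=0$ then deliver $cQ\|f\|_{\w}\|\vec g\|_{\w^{-1}}$.

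The function $B$ with constants independent of $n$ is obtained by transferring Wittwer's scalar dyadic Bellman function to the vector-valued setting, keeping the weight scalar. The concavity inequalities in \cite{W} involve the two martingale variables only through their moduli and squared moduli, so replacing a scalar modulus by an $\ell^2$ norm preserves both the size and Hessian estimates with the same constants. This yields a dimension-free dyadic estimate
$$\sum_{I\in\cD}|I|\,|\Delta_I f|\,\|\Delta_I \vec g\|_{\ell^2}\le c\,Q_2^{\mathrm{dyad}}(\w)\,\|f\|_{\w}\|\vec g\|_{\w^{-1}}$$
whose Bellman function is exactly the one used in the continuous step above; this is the vector extension promised in the abstract.

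The main obstacle is precisely maintaining dimension-freeness. The Littlewood--Paley integrand is a sum of $n$ terms $\partial_k P_t f\cdot\partial_t P_t g_k$, and a Cauchy--Schwarz step on the Hessian of a componentwise Bellman function would produce a $\sqrt n$ factor. Avoiding this forces the Hessian of $B$ to act on the pair $(\nabla P_t f,\partial_t P_t\vec g)$ through the $\ell^2$ pairing rather than componentwise, and this is exactly what the scalar-weight vector-test-function extension of Wittwer's theorem delivers. Once this extension and the verification of the pointwise Bellman inequalities along Poisson extensions are in place, the theorem follows.
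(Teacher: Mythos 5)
Your overall strategy is the same as the paper's: reduce by duality to the bilinear form, represent $(\vec g,\vec Rf)=4\int_0^\infty(\partial_t P_t\vec g,\nabla P_tf)\,t\,dt$ via the Poisson semigroup, feed harmonic extensions of $f,\vec g,\w,\w^{-1}$ (and, in the paper, also of $|f|^2\w$ and $\|\vec g\|^2\w^{-1}$) into a Bellman function whose existence comes from a vector-valued, scalar-weight version of Wittwer's dyadic theorem, and integrate over the upper half space. The genuine gap is in the step you describe as making ``the Hessian of $B$ act on the pair $(\nabla P_tf,\partial_t P_t\vec g)$ through the $\ell^2$ pairing.'' The concavity you can actually extract from the dyadic estimate is of product type, $-d^2B\ge 2\,|d\bx|\,\|d\by\|$, and this is a one-direction-at-a-time statement: applied to the composition $b=B\circ v$ with $v$ harmonic, it gives for each of the $n+1$ coordinate directions $e$ a lower bound by $|\partial_e P_tf|\,\|\partial_e P_t\vec g\|$. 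Summing these over $e$ does \emph{not} dominate the mixed quantity $\|\nabla_x P_tf\|\,\|\partial_t P_t\vec g\|$ appearing in the Littlewood--Paley integrand, because $\nabla_x P_tf$ and $\partial_t P_t\vec g$ are derivatives of the Bellman arguments in \emph{different} directions, so no single evaluation of $d^2B$ ever sees them together; the vector-valued Wittwer theorem does not deliver this cross-pairing by itself. The paper closes exactly this hole with the ``ellipse lemma'' of Dragi\v{c}evi\'c--Treil--Volberg (\cite{VD}, \cite{DTV}): any quadratic form with $(Au,u)\ge 2r_mr_l$ automatically satisfies $(Au,u)\ge \tau r_m^2+\tau^{-1}r_l^2$ for some $\tau>0$; since $\tau$ depends on the point but not on the direction, one may sum over the $n+1$ directions and then undo the arithmetic--geometric mean to get $-\Delta_{x,t}b\ge 2\|\nabla_{x,t}P_tf\|\,\|\nabla_{x,t}P_t\vec g\|\ge 2\|\nabla_xP_tf\|\,\|\partial_tP_t\vec g\|$ with an absolute constant. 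Without this upgrade (or an explicit Bellman function already possessing the split form of concavity) your argument either loses a dimensional factor or does not close.

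A secondary, smaller issue: the concluding step ``integration over the upper half-space and the boundary size bound at $t=0$'' is itself a place where dimension can re-enter. The paper does not integrate against the boundary at $t=0$; it applies Green's formula on truncated cylinders $T_{R,\epsilon}$, uses the scaling of the Green function and the identity $\frac{\partial G^{1,0}}{\partial t}(0,0)=c_n$ (precisely the Poisson kernel normalization), so that the dimensional constants from the size estimate $b(0,R+\epsilon)\le c_n(R+\epsilon)^{-n}CQ(\|f\|^2_{\w}+\|\vec g\|^2_{\w^{-1}})$ cancel exactly in the limit $R\to\infty$, $\epsilon\to 0$. Some justification of this kind, with the constant tracked, is needed to turn the pointwise Laplacian estimate into the inequality $\int_0^\infty\int_{\R^n}(-\Delta_{x,t}b)\,t\,dx\,dt\le C\tilde Q_2(\w)\bigl(\|f\|^2_{\w}+\|\vec g\|^2_{\w^{-1}}\bigr)$ with $C$ independent of $n$.
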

The estimate is sharp in the following sense: there exists no function $\Phi:[1,\infty[\to \mathbb{R}_+$ so that $\frac{\Phi(x)}{x}\to 0$ when $x\to \infty$ with $\| \vec{R}\|_{L_{\w}^2\to L_{\w}^2}\le c\Phi(\tilde{Q}_2(\w))$ for all weights $\w\in \tilde{A}_2$ and all $n$.

A similar estimate holds for other exponents $1<p<\infty$ and is optimal as well. It can be found in section \ref{section_p}. 

We also investigate the relationship between different Muckenhoupt classes.  Notably, their relation changes with dimension:
\begin{theorem} \label{PoissonA2neqA2}
Poisson-$A_2$ and classical $A_2$ only define the same classes of weights when the dimension is one:  $\tilde{A}_2=A_2$ if and only if  $n=1$. Otherwise $\tilde{A}_2$ is properly included in $A_2$.
\end{theorem}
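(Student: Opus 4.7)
The plan is to handle the two inclusions separately. I would first establish $\tilde A_2\subset A_2$ in every dimension by a pointwise lower bound on the Poisson kernel: $(t^2+|y|^2)^{(n+1)/2}\le 2^{(n+1)/2}t^{n+1}$ for $|y|\le t$ gives $P_t(y)\gtrsim t^{-n}$ on $B(0,t)$, and integrating against $w$ yields $\langle w\rangle_{B(x,t)}\le C_n\,P_t w(x)$. Applying this to $w$ and $w^{-1}$ at the same $(x,t)$ gives $Q_2(w)\le C_n^{\,2}\,\tilde Q_2(w)$, so $\tilde A_2\subset A_2$ in arbitrary dimension.

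For $n=1$, I would prove the reverse inclusion $A_2\subset\tilde A_2$ via the standard dyadic decomposition of the Poisson integral combined with the Coifman--Fefferman self-improvement. Decomposing $\R$ into annuli at scales $2^k t$ around $x$, the usual size estimate on the Poisson kernel gives
\[
P_t w(x) \;\lesssim\; \sum_{k\ge 0} 2^{-k}\,\langle w\rangle_{I_k}, \qquad P_t w^{-1}(x) \;\lesssim\; \sum_{l\ge 0} 2^{-l}\,\langle w^{-1}\rangle_{I_l},
\]
with $I_k\assign B(x,2^k t)$. The diagonal $k=l$ in the product is controlled by $\langle w\rangle_{I_k}\langle w^{-1}\rangle_{I_k}\le Q_2(w)$. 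Off the diagonal, say $k<l$, I would invoke reverse H\"older for $w$: there is an exponent $r=r(Q_2(w))>1$ with $\langle w\rangle_{I_k}\lesssim\langle w\rangle_{I_l}\,(|I_l|/|I_k|)^{1/r}=\langle w\rangle_{I_l}\,2^{(l-k)/r}$. Combined with $A_2$ on $I_l$ this produces $\langle w\rangle_{I_k}\langle w^{-1}\rangle_{I_l}\lesssim Q_2(w)\,2^{(l-k)/r}$, and the resulting double sum $\sum 2^{-(k+l)}2^{(l-k)/r}$ is a convergent geometric series because $1-1/r>0$. Dimension one enters critically here: the ball-volume ratio $|I_l|/|I_k|=2^{l-k}$ barely matches the Poisson decay $2^{-(l-k)}$ with a margin $2^{(l-k)/r}$ one can afford, whereas for general $n$ the ratio becomes $2^{n(l-k)}$ and convergence would demand $r>n$, which reverse H\"older cannot in general supply.

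For $n\ge 2$, I would construct the counterexample $w(x)=|x|^{\alpha}$ with $\alpha\in[1,n)$. It is classical that $|x|^{\alpha}\in A_2(\R^n)$ precisely for $|\alpha|<n$, so $w\in A_2$ in this range. In polar coordinates,
\[
P_t w(0)\;=\;c_n\omega_{n-1}\int_0^\infty\frac{t\,r^{\alpha+n-1}}{(t^2+r^2)^{(n+1)/2}}\,\mathd r,
\]
the integrand is comparable to $r^{\alpha-2}$ as $r\to\infty$, so the integral diverges precisely when $\alpha\ge 1$. Since $P_t w^{-1}(0)$ remains finite for $\alpha>-1$, one has $\tilde Q_2(w)=\infty$ and $w\notin\tilde A_2$; the interval $[1,n)$ is nonempty exactly when $n\ge 2$, which gives the strict inclusion in those dimensions.

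The most delicate step is the off-diagonal bound in the one-dimensional argument: the reverse H\"older exponent and its constant both depend quantitatively on $Q_2(w)$, and careful bookkeeping there would produce the quadratic relation $\tilde Q_2\lesssim Q_2^{\,2}$ mentioned in the introduction. Everything else amounts to standard computations.
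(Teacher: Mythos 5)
Your proposal is correct and covers all three ingredients — the universal inclusion $\tilde A_2\subset A_2$, the counterexample $|x|^\alpha$ for $n\ge 2$, and the reverse inclusion $A_2\subset\tilde A_2$ for $n=1$. The first two are essentially identical to the paper's argument: the pointwise kernel bound yielding $\langle w\rangle_{B(x,t)}\lesssim P_t w(x)$, and the divergence of $P_t(|\cdot|^\alpha)(0)$ for $\alpha\ge 1$, $n\ge 2$ (the paper does the same polar-coordinate computation broken into dyadic shells).

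Where you differ is the $n=1$ direction. The paper simply cites Hukovi\'c's thesis \cite{H} for the two-sided comparison $\tilde Q_2(\w)\lesssim Q_2(\w)\lesssim\tilde Q_2(\w)^2$ and offers no proof. You instead give a self-contained argument: dyadic decomposition of the Poisson kernel so that $P_t w\lesssim\sum_k 2^{-k}\langle w\rangle_{I_k}$, diagonal terms handled by $A_2$, and off-diagonal terms handled by reverse H\"older to trade $\langle w\rangle_{I_k}$ for $\langle w\rangle_{I_l}$ at the cost of $2^{(l-k)/r}$, leaving a convergent geometric series because $r>1$. This is clean and correct, and it transparently explains \emph{why} the argument is one-dimensional: the Poisson decay $2^{-k}$ in any dimension just barely dominates the one-dimensional volume growth $2^{l-k}$, whereas in dimension $n$ the volume ratio $2^{n(l-k)}$ would force $r>n$, which reverse H\"older cannot guarantee for weights with large $A_2$ constant. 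Your version has the advantage of yielding the quantitative bound $\tilde Q_2(\w)\lesssim Q_2(\w)^2$ from first principles (the reverse H\"older exponent $r-1\sim 1/Q_2(\w)$ makes the geometric sum of size $\sim Q_2(\w)$, times the diagonal $Q_2(\w)$), rather than outsourcing it. One tiny remark: for the counterexample you do not actually need $P_tw^{-1}(0)<\infty$ — divergence of $P_tw(0)$ alone already forces $\tilde Q_2(w)=\infty$ since $P_tw^{-1}(0)>0$ — but noting finiteness is a harmless sanity check.
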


In our method of proof, the dimensionless estimate in Theorem \ref{lbd} requires us to prove a vector valued version of a theorem by Wittwer (see section \ref{section_p} for  $p\neq 2$.)

\begin{theorem}\label{Wittwervector}
Let $\mathcal{H}$ be a separable Hilbert space and $\vec{f}:\mathbb{R} \to \mathcal{H}$. For each $I$ in the dyadic collection $\mathcal{D}$, let $\sigma_I$ denote any unitary transformation on  $\mathcal{H}$ and $h_I$ a Haar function. The vector valued operator 
$$\vec{T}_{\sigma}f=\sum_{I\in \mathcal{D}}\sigma_I(\vec{f},h_I)h_I$$
has operator norm uniformly bounded by $cQ_2(\w)$.
\end{theorem}

In section \ref{section_extrapolation} we also prove a sharp extrapolation theorem in the martingale setting for filtered spaces with continuous index.

\section{The dimension-free estimate}\label{pf}

Since 
\[\|\vec{R}\|_{L^2_{\w}(\R^n;\C)\to L^2_{\w}(\R^n;\C^n)}=\|\w^{1/2}\vec{R}\w^{-1/2}\|_{L^2(\R^n;\C)\to L^2(\R^n;\C^n)}\] where the outer multiplication by $\w^{1/2}$ is a scalar multiplication. We can estimate $\|\vec{R}\|_{L^2_{\w}\to L^2_{\w}}$  
via $L^2$ duality. It is sufficient to estimate 
\[|(\vec{g},\w^{1/2}\vec{R}\w^{-1/2}f)|\le c\tilde{Q}_2(\w)\|f\|\|\vec{g}\|\]
for test functions (smooth and compactly supported) $f, \vec{g}$, where $f$ is scalar valued and $\vec{g}$ vector valued. Or (considering $\w^{-1/2}f$ instead of $f$ and $\w^{1/2}\vec{g}$ instead of $\vec{g}$):
\[|(\vec{g},\vec{R}f)|\le c \tilde{Q}_2(\w) \|\vec{g}\|_{\w^{-1}}\|f\|_{\w}.\]

To prove this estimate, we prove the following theorem:
\begin{theorem}\label{bydu}
For test functions $f,\vec{g}$ and $\w \in \tilde{A}_2$ we have the following estimate:
\begin{equation}
|(\vec{g},\vec{R} f)| \le c\tilde{Q}_2(\w)(\|\vec{g}\|_{\w^{-1}}^2+\|f\|_{\w}^2);
\end{equation}
here $c$ does not depend on $f,\vec{g},n,k$ or $\w$.
\end{theorem}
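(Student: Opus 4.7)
The plan is to prove Theorem \ref{bydu} via a Bellman function argument in the Poisson upper half-space, transferring the dimensionless dyadic estimate of Theorem \ref{Wittwervector} to the continuous Riesz setting in the spirit of Nazarov-Treil-Volberg and Petermichl-Volberg.

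First I would rewrite the bilinear form as a Littlewood-Paley integral. Set $U(x,t) = P_t f(x)$, $H_k(x,t) = P_t g_k(x)$ and $V_k(x,t) = P_t R_k f(x)$; the system $(U, V_1, \dots, V_n)$ satisfies the generalized Cauchy-Riemann equations $\partial_t V_k = -\partial_k U$, $\sum_k \partial_k V_k = \partial_t U$, and $\partial_j V_k = \partial_k V_j$. Applying the Stein identity
\[ (\phi,\psi) = 2 \int_0^\infty\!\!\int_{\R^n} t\, \nabla(P_t\phi)\cdot \nabla(P_t\psi)\, dx\,dt \]
(with $\nabla$ the full space-time gradient) to each pairing $(g_k, R_k f)$ and using the Cauchy-Riemann relations to eliminate the $\partial_t V_k$ factors, one arrives at a representation
\[ (\vec g, \vec R f) = 2 \int_0^\infty\!\!\int_{\R^n} t\, Q\bigl(\nabla U(x,t), \nabla \vec H(x,t)\bigr) dx\,dt, \]
in which $Q$ is a bilinear form in the space-time gradients whose absolute value is pointwise bounded by $|\nabla U|\,\|\nabla \vec H\|$, where $\|\nabla \vec H\|$ is the Frobenius norm of the $n\times (n+1)$ matrix $\nabla \vec H$.

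Next I would construct a Bellman function $\mathfrak B$ on the convex domain
\[ \Omega_Q = \{(X, \vec Y, w, w^*)\in \R\times\R^n\times\R_+\times\R_+ : 1\le ww^*\le Q\}, \qquad Q:=\tilde Q_2(\w), \]
satisfying the size bound $0\le \mathfrak B\le C\,Q\,(X^2/w^* + \|\vec Y\|^2/w)$ and the Hessian inequality $-d^2\mathfrak B \ge 2\,|dX|\,\|d\vec Y\|$ as a quadratic form along admissible directions. Because $\vec Y$ enters only through its Euclidean norm $\|\vec Y\|$, the Hessian constant is independent of $n$; $\mathfrak B$ is the continuous analogue of the scalar dyadic Bellman function from \cite{W} that underlies Theorem \ref{Wittwervector}, with dyadic bi-convexity replaced by the Hessian inequality appropriate to the Poisson semigroup. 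Substituting $(X,\vec Y,w,w^*) = (U(x,t),\vec H(x,t),W(x,t),W^*(x,t))$, with $W=P_t\w$ and $W^*=P_t\w^{-1}$, and applying Green's formula with the Carleson weight $t\,dx\,dt$ then gives
\[ \int_0^\infty\!\!\int_{\R^n} t\, |\nabla U|\,\|\nabla\vec H\|\, dx\,dt \le \tfrac12\int_{\R^n} \mathfrak B(f,\vec g,\w,\w^{-1})\, dx \le C\,\tilde Q_2(\w)\bigl(\|f\|_\w^2 + \|\vec g\|_{\w^{-1}}^2\bigr), \]
and combining this with the representation of the first step yields the claim.

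The main obstacle is the construction of a Bellman function with a dimension-free Hessian constant. Summing the scalar Bellman function of the Hilbert transform case over the $n$ Riesz coordinates loses a factor of $n$ in the Hessian bound. The route to a truly dimensionless estimate is to treat $\vec Y$ jointly through the Euclidean norm $\|\vec Y\|$. Verifying that such a function satisfies the required Hessian inequality against all admissible Poisson-induced directions -- in particular correctly handling the mixed space-time derivatives $\partial_k\partial_t$ that couple the scalar $U$ to the vector $\vec H$ -- is the technical heart of the argument, and is precisely the step in which the scalar-weighted, vector-valued martingale-transform estimate of Theorem \ref{Wittwervector} gets transferred to the continuous Poisson setting.
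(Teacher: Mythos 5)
Your overall strategy matches the paper's: represent $(\vec g,\vec R f)$ as a Littlewood--Paley/Poisson integral, compose a Bellman function with Poisson extensions of the data and weights, bound the integrand by the space-time Laplacian of the composed function, and close the argument with Green's formula. The representation formula you invoke (Stein identity plus Cauchy--Riemann) is a legitimate variant of the paper's Lemma~\ref{grf}, and your Bellman domain with variables $(X,\vec Y,w,w^*)$ and size bound $C Q(X^2/w^* + \|\vec Y\|^2/w)$ is an admissible alternative to the paper's six-variable domain with the linear size estimate $0\le B\le 5Q(\bX+\bY)$ (the latter keeps $P_t(|f|^2\w)$ and $P_t(\|\vec g\|^2\w^{-1})$ as explicit coordinates, which makes the size step at $(0,R+\epsilon)$ cleaner, but both can be made to work).

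There is, however, a genuine gap at the decisive step. From the concavity condition $-d^2\mathfrak B\ge 2|dX|\,\|d\vec Y\|$ you cannot directly conclude the pointwise Laplacian bound $-\Delta_{x,t} b\ge 2|\nabla U|\,\|\nabla\vec H\|$. The chain rule gives
\[
-\Delta_{x,t} b \;=\; -\sum_{i=1}^{n}\bigl(d^2B\,\partial_{x_i}v,\partial_{x_i}v\bigr)-\bigl(d^2B\,\partial_t v,\partial_t v\bigr)
\;\ge\; 2\sum_{i}|\partial_{x_i}U|\,\|\partial_{x_i}\vec H\|+2|\partial_t U|\,\|\partial_t\vec H\|,
\]
and by Cauchy--Schwarz the right-hand side is \emph{dominated by}, not a lower bound for, $2|\nabla U|\,\|\nabla\vec H\|$; the inequality you need points the wrong way. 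The paper resolves exactly this issue with the ``ellipse lemma'' (Lemma~\ref{ellipse}): the product-form lower bound for $-d^2B$ is upgraded, at each fixed point $(x,t)$, to a sum-of-squares bound $\tau|dX|^2+\tau^{-1}\|d\vec Y\|^2$ with one uniform $\tau=\tau_{x,t,K}$ for all $n+1$ differentiation directions. Summing these over directions and then applying the elementary inequality $\tau a^2+\tau^{-1}b^2\ge 2ab$ to the full gradients gives the desired $-\Delta b\ge 2\|\nabla P_t f\|\,\|\partial_t P_t\vec g\|$. Without this conversion your argument does not close, so you should either state and invoke the ellipse lemma (or the version in \cite{VD}, \cite{DTV}) or build the sum-of-squares form directly into the Bellman concavity hypothesis. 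A secondary point: you should justify why your stronger four-variable Bellman function with the quadratic-over-linear size bound exists with dimension-free constants; the paper instead derives existence of its Bellman function from the dyadic Theorem~\ref{Wittwervector} by an extremal-problem argument, and you would need an analogous source for yours.
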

Considering $\lambda f$ and $\lambda ^{-1}\vec{g}$ for appropriate $\lambda$, with the considerations above  yields Theorem \ref{lbd}.

Before we turn to the proof of Theorem \ref{bydu}, let us formulate several useful lemmata.

\subsection{Several useful Lemmata}

The following is a well known fact. It is, for example, stated in \cite{GV}.
\begin{lemma}\label{grf}
\[(\vec{g},\vec{R}f)=-4\int_0^{\infty}(\frac{d}{dt}P_t\vec{g},\nabla P_t f)tdt.\]
\end{lemma}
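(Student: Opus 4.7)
The plan is to reduce the identity to a direct Fourier-side computation, since every operator in sight is a Fourier multiplier with an explicit symbol. First I would record the symbols of the four operators in play: the Riesz transform $R_k$ has symbol $i\xi_k/|\xi|$ by the definition given in the paper; the Poisson semigroup satisfies $\widehat{P_t h}(\xi)=e^{-t|\xi|}\hat h(\xi)$; hence $\widehat{\tfrac{d}{dt}P_t g}(\xi)=-|\xi|e^{-t|\xi|}\hat g(\xi)$ and $\widehat{\partial_k P_t f}(\xi)=i\xi_k e^{-t|\xi|}\hat f(\xi)$.

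Next I would apply Plancherel to both sides. On the left, $(g,R_k f)$ becomes a multiplier integral with symbol $i\xi_k/|\xi|$ pairing $\hat g$ and $\overline{\hat f}$. On the right, Plancherel converts the $L^2$-pairing under the $t$-integral into an integral in $\xi$ of $(-|\xi|e^{-t|\xi|})\,\overline{(i\xi_k e^{-t|\xi|})}\,\hat g(\xi)\,\overline{\hat f(\xi)}$; Fubini (trivially justified for Schwartz test functions) allows the $t$-integration to be performed first. The only computation of substance is the elementary identity
\[\int_0^\infty t\, e^{-2t|\xi|}\,dt \;=\;\frac{1}{4|\xi|^2},\]
and the factor of $4$ in the statement is chosen precisely so as to cancel this $1/4$. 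What remains is $|\xi|\xi_k/|\xi|^2=\xi_k/|\xi|$, which together with the $i$ from $\partial_k$ reproduces exactly the symbol of $R_k$, matching the left-hand side.

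An alternative, coordinate-free route that avoids the Fourier side uses the spectral formula $A^{-1}=\int_0^\infty P_s\,ds$ together with $(\partial_t P_t)P_t=\tfrac12\partial_t P_{2t}$. An integration by parts in $t$ then gives $A^{-1}=-4\int_0^\infty (\partial_t P_t)P_t\,t\,dt$. Applying this identity to $\partial_k f$ in $(g,R_kf)=(g,\partial_k A^{-1}f)=(A^{-1}g,\partial_k f)$, and then shuffling one factor of $P_t$ across the pairing (which is legitimate by self-adjointness of $P_t$ and its commutation with $\partial_k$), produces the stated formula. There is no serious obstacle in either proof; the entire matter is a bookkeeping exercise in the symbolic calculus of the semigroup, with the only mild care required being the justification of Fubini and of the integration by parts, both automatic for Schwartz data.
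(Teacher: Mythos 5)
Your proposal is correct in substance, but it is not the paper's argument. The paper proves the lemma by applying $F(0)=\int_0^\infty F''(t)\,t\,dt$ to $F(t)=(P_tg,P_tR_kf)$, computing $F''(t)=4(AP_tg,AP_tR_kf)$ from $\frac{d}{dt}P_t=-AP_t$ and the symmetry of $A$, and then commuting $A$, $P_t$ and $\partial_k$ using $R_k=\partial_k\circ A^{-1}$. Your primary route is the Fourier/Plancherel computation that the authors explicitly mention as an alternative (``the same result can be obtained by the use of the Fourier transform'') but do not carry out; for Schwartz data it is perfectly adequate, and it makes the provenance of the factor $4$ transparent via $\int_0^\infty t\,e^{-2t|\xi|}\,dt=1/(4|\xi|^2)$. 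Your second route, writing $A^{-1}=\int_0^\infty P_s\,ds$ and integrating by parts to get $A^{-1}=-4\int_0^\infty(\partial_tP_t)P_t\,t\,dt$, is essentially the paper's semigroup algebra repackaged, so nothing is gained or lost either way. One point you gloss over --- and the paper's own last line glosses over the same thing --- is the sign: carrying the conjugation of $i\xi_k$ through your Plancherel step, or substituting $AP_tg=-\frac{d}{dt}P_tg$ in the paper's expression $4\int_0^\infty(AP_tg,AP_tR_kf)\,t\,dt$, or simply inserting your own $-4$ formula for $A^{-1}$, yields $(g,R_kf)=4\int_0^\infty(AP_tg,\partial_kP_tf)\,t\,dt=-4\int_0^\infty(\frac{d}{dt}P_tg,\partial_kP_tf)\,t\,dt$, i.e.\ the stated identity with an extra minus sign; your route 2 in fact produces this corrected version rather than literally ``the stated formula''. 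The discrepancy is harmless downstream, since the lemma is only ever invoked through $|(\vec g,\vec Rf)|$, but your assertion that the symbols match exactly is not quite accurate as written.
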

The proof using semigroups is very simple and concise, so we include it for the convenience of the reader. Instead of using semigroups, the same result can be obtained by the use of the Fourier transform.

\begin{proof}
Observe that $F(0)=\int_0^{\infty}F''(t)tdt$ for sufficiently fast decaying $F$. 
So \[(g,R_kf)=(P_0g,P_0R_kf)=\int_0^{\infty}\frac{d^2}{dt^2}(P_tg,P_tR_kf)\;\;tdt.\]
The right hand side becomes
\[\int_0^{\infty}\left((\frac{d^2}{dt^2}P_tg,P_tR_kf)+2(\frac{d}{dt}P_tg,\frac{d}{dt}P_tR_kf)+(P_tg,\frac{d^2}{dt^2}P_tR_kf)\right) tdt.\]
Now we use the fact that $\frac{d}{dt}P_t=-AP_t$ and $\frac{d^2}{dt^2}P_t=A^2P_t$ and symmetry of $A$ to see that the above equals \[4\int_0^{\infty}(AP_tg,AP_tR_kf)tdt.\] 
Observing that $A$ commutes with $P_t$ and $\partial_k$, that $R_k=\partial_k \circ A^{-1}$, and using $\frac{d}{dt}P_t=-AP_t$, we obtain
\begin{equation*}
(g,R_kf)=-4\int_0^{\infty}(\frac{d}{dt}P_tg,\partial_kP_tf)tdt.
\end{equation*}
For function $f$ and vector function $\vec{g}$  this becomes
\begin{equation*}
(\vec{g},\vec{R} f)=-4\int_0^{\infty}(\frac{d}{dt}P_t\vec{g},\nabla P_tf)tdt.
\end{equation*}
\end{proof}

Our final estimate is based on a sharp weighted estimate for a dyadic model operator in one dimension that we now describe. Let $\mathcal{D}=\{2^k[n;n+1):n,k\in \mathbb{Z}\}$ denote the standard dyadic grid in $\R$. Let for $I\in \mathcal{D}$ denote $I_{\pm}\in \mathcal{D}$ the respective right and left halves of the interval $I$. Then, $h_I=|I|^{-1/2}(\chi_{I_+}-\chi_{I_-})$ form the Haar basis normalized in $L^2$. Let $\sigma$ denote a sequence $\sigma_I=\pm 1$. By $T_{\sigma}$ we mean $$T_{\sigma}f=\sum_{I\in \mathcal{D}}\sigma_I(f,h_I)h_I.$$ Wittwer's estimate from \cite{W}, is $$\sup_{\sigma}\|T_{\sigma}\|_{L^2_{\w}\to L^2_{\w}}\le c Q_2(\w)$$ with $c$ independent of the weight. This estimate allows one to claim the existence of a Bellman function such as in \cite{PV}, however only for the case of real-valued functions. In the vector-valued case, we will need Theorem \ref{Wittwervector}, which in turn implies the existence of a Bellman function adapted to vector-valued functions arising in our problem:

\begin{lemma}\label{L:Bellman}
For any $Q>1$ let $\cD$ be a subset of $\R\times \R \times \C \times \C^n \times \R\times\R$
$$\cD_Q = \{(\bX,\bY,\bx,\by,\br,\bs) : |\bx|^2<\bX\bs, \,\|\by\|^2<\bY\br, \,1<\br\bs<Q\}.$$ 
For any compact $K \subset \cD_Q$ there exists an infinitely differentiable function $B_{K,Q}$ defined in a small neighborhood of $K$ that still lies inside $\cD_Q$
so that the following estimates hold in $K$.
\begin{equation}\label{size}0\le B_{K,Q}\le c Q(\bX+\bY),\end{equation}
\begin{equation}\label{concavity}-d^2B_{K,Q}\ge 2 \; |d\bx| \; \|d\by\|.\end{equation}
 \end{lemma}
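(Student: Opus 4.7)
The plan is to take $B_{K,Q}$ as a smoothing of the Bellman function naturally associated with the dyadic vector-valued operator of Theorem \ref{Wittwervector}. Fix a reference interval $I_0 \subset \R$ of length one and interpret a point $v = (\bX,\bY,\bx,\by,\br,\bs) \in \cD_Q$ as the averages over $I_0$ of admissible data $(f,\vec{g},\w)$:
\begin{equation*}
\bx = \langle f\rangle_{I_0},\ \ \by = \langle \vec{g}\rangle_{I_0},\ \ \bX = \langle |f|^2\w\rangle_{I_0},\ \ \bY = \langle \|\vec{g}\|^2\w^{-1}\rangle_{I_0},\ \ \br = \langle \w\rangle_{I_0},\ \ \bs = \langle \w^{-1}\rangle_{I_0}.
\end{equation*}
The defining inequalities of $\cD_Q$ are then precisely Cauchy--Schwarz, Jensen and the dyadic $A_2$ condition. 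As a candidate I take the scale-invariant supremum
\begin{equation*}
\mathbf{B}(v) := \frac{1}{|I_0|}\,\sup \sum_{I \in \cD,\,I\subseteq I_0}|(f, h_I)|\,\|(\vec{g}, h_I)\|,
\end{equation*}
where the supremum runs over admissible triples with the prescribed averages.

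For each $I$ one may choose an orthogonal transformation $\sigma_I$ that sends a fixed unit vector $\vec{e}$ onto the direction of $(\vec{g}, h_I)$. With $\vec{f} := f\vec{e}$, each summand then equals $\langle\,\sigma_I(\vec{f},h_I)h_I,\,\vec{g}\,\rangle$, so the sum is the bilinear pairing $(\vec{g},T_\sigma \vec{f})$ for the operator in Theorem \ref{Wittwervector}. That theorem and Cauchy--Schwarz give $\mathbf{B}(v) \le cQ\,\|f\|_\w\|\vec{g}\|_{\w^{-1}}/|I_0| = cQ\sqrt{\bX\bY} \le \tfrac{c}{2}Q(\bX+\bY)$, which is \eqref{size} with constant well within the allowed $5$.

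The concavity \eqref{concavity} is obtained from the self-similarity of $\mathbf{B}$. Splitting $I_0 = I_0^+ \cup I_0^-$, writing $v^\pm$ for the averages on the halves, and observing that $|(f,h_{I_0})| = |I_0|^{1/2}|\Delta\bx|$ with $\Delta\bx := (\bx^+-\bx^-)/2$, and similarly for $\|(\vec{g},h_{I_0})\|$, the concatenation of near-optimizers on the two halves together with the contribution from $I_0$ itself yields the chord inequality
\begin{equation*}
\mathbf{B}(v) \ \ge\ \tfrac{1}{2}\bigl(\mathbf{B}(v^+)+\mathbf{B}(v^-)\bigr) + |\Delta\bx|\,\|\Delta\by\|.
\end{equation*}
Setting $v^\pm = v\pm\delta$ and Taylor expanding gives $-d^2\mathbf{B}(v)(\delta,\delta)\ge 2|\delta_\bx|\,\|\delta_\by\|$, which is exactly \eqref{concavity}. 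Finally, $\mathbf{B}$ is a priori only upper semicontinuous, so the actual $B_{K,Q}$ is produced by a standard mollification: pick $\varepsilon>0$ such that the closed $\varepsilon$-neighborhood of $K$ still lies inside $\cD_Q$, and convolve $\mathbf{B}$ with a nonnegative $C^\infty$ bump of total mass one supported in a ball of radius $\varepsilon/2$. Both the size estimate and the chord form of the concavity inequality pass through averaging, and smoothness lets us pass from the chord form to the infinitesimal form.

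The main obstacle is the discrete concavity step, specifically obtaining the vector norm $\|\Delta\by\|$ on the right hand side of the chord inequality rather than a single-coordinate term $|\Delta\by_j|$. This is exactly what forces the $\sigma_I$ in Theorem \ref{Wittwervector} to range over arbitrary orthogonal transformations, not just signs $\pm 1$, and so is the reason the original scalar estimate of Wittwer had to be upgraded to the vector-valued form given in that theorem; without this strengthening, the factor $2|d\bx|\,\|d\by\|$ in \eqref{concavity}---and hence the dimensionless Riesz estimate---would be out of reach.
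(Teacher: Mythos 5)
Your proof follows the paper's route: you derive the localized bilinear estimate corresponding to (\ref{vectordual}) from Theorem~\ref{Wittwervector} by duality, define the Bellman function as the extremizer of the associated supremum over admissible data (with the scalar $\bx$ standing for the vector $(\bx,0,\ldots,0)$, exactly as the paper remarks), and obtain the size bound and the discrete concavity from the dyadic estimate and self\nobreakdash-similarity before mollifying. The paper delegates the extremal-problem machinery to \cite{PV} and \cite{PSW}; your sketch spells out those same steps explicitly, so the two arguments are essentially identical.
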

The last inequality describes an operator inequality where the left hand side is the negative Hessian of $B$. Notice that one of the variables, namely $\by$, is vector-valued.

\begin{proof}(of Theorem \ref{Wittwervector}) To prove this estimate, first observe that the estimate for all choices of $\sigma_I$ is proved by duality if we show the estimate
\[ \frac{1}{| J |} \sum_{I \in \mathcal{D} (J)} \| (\vec{f}, h_I) \| \|
   (\vec{g}, h_I) \| \leqslant c Q_2 (\w) \langle  \| \vec{f} \|^2 \w \rangle^{1
   / 2}_J \langle  \| \vec{g} \|^2 \w^{- 1} \rangle^{1 / 2}_J. \]
   One splits the estimate into the classical four sums.
  Let for a weight $\nu$ denote $h^{\nu}_I$ the disbalanced Haar
functions, forming an orthonormal basis in $L^2(\nu)$. Here $h_I = \alpha^{\nu}_I h^{\nu}_I + \rho^{\nu}_I \chi_I | I |^{-1/2}$, where one
calculates $$\rho^{\nu}_I = \frac{\langle \nu \rangle_{I_+}- \langle \nu \rangle_{I_-}}{\langle \nu \rangle_I}  {\text{ and }}
\alpha^{\nu}_I = \frac{\langle \nu \rangle^{1 / 2}_{I_+}\langle \nu \rangle^{1 / 2}_{I_-}}{\langle \nu \rangle^{1 / 2}_{I}}$$ and so obtains four
sums
\[ \mathbf{I}= \frac{1}{| J |} \sum_{I \in \mathcal{D} (J)} | \alpha^{w^{-
   1}}_I | | \alpha^{\w}_I | \| (\vec{f} \w, h^{\w^{ - 1}}_I)_{\w^{- 1}} \| \| (\vec{g}
   \w^{- 1}, h^{\w}_I)_{\w} \| \]
\[ \mathbf{I}\mathbf{I}= \frac{1}{| J |} \sum_{I \in \mathcal{D} (J)} |
   \alpha^{\w^{- 1}}_I |  | \rho^{\w}_I | \| (\vec{f} \w, h^{\w^{- 1}}_I)_{\w^{- 1}}
   \| \| (\vec{g}, \chi_I | I |^{-1/2}) \| \]
\[ \mathbf{I}\mathbf{I}\mathbf{I}= \frac{1}{| J |} \sum_{I \in \mathcal{D}
   (J)} | \alpha^{\w}_I |  | \rho^{\w^{- 1}}_I | \| (\vec{f}, \chi_I| I |^{-1/2}) \|
   \| (\vec{g} \w^{- 1}, h^{\w}_I)_{\w} \| \]
\[ \mathbf{I}\mathbf{V}= \frac{1}{| J |} \sum_{I \in \mathcal{D} (J)} |
   \rho^{\w^{}}_I |  | \rho^{\w^{- 1}}_I | \| (\vec{f}, \chi_I| I |^{-1/2})\| \|
   (\vec{g}, \chi_I| I |^{-1/2}) \|. \]
Sum $\mathbf{I} \lesssim Q_2 (\w)^{1 / 2} \langle  \| \vec{f} \|^2 \w \rangle^{1
/ 2}_J \langle  \| \vec{g} \|^2 \w^{- 1} \rangle^{1 / 2}_J$ is estimated via
Cauchy Schwarz and by using that we have an orthonormal basis in the weighted
spaces, and  $| \alpha^{\w^{- 1}}_I | | \alpha^{\w}_I | \lesssim Q_2 (\w)^{1 /
2}$. Sums $\mathbf{I}\mathbf{I}$ and $\mathbf{I}\mathbf{I}\mathbf{I}$ are
similar. We estimate $\mathbf{I}\mathbf{I} \lesssim \langle  \| \vec{f} \|^2 \w
\rangle^{1 / 2}_J \left( \frac{1}{| J |} \sum_{I \in \mathcal{D} (J)} |
\alpha^{\w^{- 1}}_I |^2  | \rho^{\w}_I |^2 \| (\vec{g}, \chi_I| I |^{-1/2}) \|^2 \right)^{1 /
2} .$ The second part involves use of a Carleson embedding theorem for vector
functions: 
\begin{equation*}
\frac{1}{| K |} \sum_{J \in \mathcal{D} (K)} a_J \langle \w
\rangle^2_J \lesssim \langle \w \rangle_K \; \forall K \Rightarrow \frac{1}{| J
|} \sum_{I \in \mathcal{D} (J)} a_I \| (\vec{g}, \chi_I| I |^{-1}) \|^2 \lesssim \langle
\|\vec{g} \w^{- 1 / 2}\|^2 \rangle_J
\end{equation*} 
applied to $a_I =| I |  | \alpha^{w^{- 1}}_I |^2  | \rho^{\w}_I|^2$. The testing condition on the left hand side is a scalar estimate and was shown in \cite{W}. To see the proof of the implication above, one 
estimates $\| (\vec{g}, \chi_I| I |^{-1}) \|\le \langle \|\vec{g}\| \rangle_I$ and uses the scalar weighted Carleson embedding theorem, see \cite{NTV} p. 911. 
Sum $\mathbf{I}\mathbf{V}$ can be handled in the same way as the short cut in \cite{RTV} p. 7 using the maximal function after estimating $\| (\vec{g}, \chi_I| I |^{-1}) \|\le \left( \|\vec{g}\w^{-1}\|\w,\chi_I | I |^{-1}\right)$ and similar for $f$. 
%
\end{proof}

\begin{proof}(of Lemma \ref{L:Bellman}) Using Theorem \ref{Wittwervector}, by duality and localising, one obtains an inequality of the form
$$
\frac1{|J|}\sum_{I\in \mathcal{D}(J)} \|(\vec{f},h_I) \|\|(\vec{g},h_I) \|\le cQ_2(\w)\langle\|\vec{f}\|^2\w\rangle^{1/2}_J  \langle\|\vec{g}\|^2\w^{-1}\rangle^{1/2}_J.
$$
By setting up an extremal problem in the same way as in \cite{PV} p. 294 (scalar weigted) or \cite{PSW} p. 320 (vector, unweighted) one obtains the existence of a Bellman function with variables $\bX=\langle \|\vec{f}\|^2\w \rangle, \bY=\langle \|\vec{g}\|^2\w^{-1}\rangle, \bx=\langle \vec{f}\rangle, \by=\langle \vec{g}\rangle, \br=\langle \w\rangle, \bs=\langle \w^{-1} \rangle$. Replacing in \cite{PV} p. 294 $x^2$ by $\|\bx\|^2$ and $y^2$ by $\|\by\|^2$ in the description of the domain, one obtains the Bellman function as
claimed in Lemma \ref{L:Bellman}, where the scalar parameter $\bx$ stands for the vector parameter $(\bx,0,\ldots,0)$.
\end{proof}

The estimate (\ref{concavity}) on the Hessian is not quite enough for us.
We will need the following form of a Lemma that has been proven in \cite{VD} and generalised in \cite{DTV}, the so-called `ellipse lemma'.
\begin{lemma}\label{ellipse}
Let $m,l,k \in \N$. Denote $d=m+l+k$. For arbitrary $u\in \R^{m+l+k}$ write $u=u_m \oplus u_l \oplus u_k$, where $u_i \in \R^i$ for $i=m,l,k$. Let $r_m=\|u_m\|, r_l=\|u_l\|$. Suppose the matrix $A\in \R^{d\times d}$ is such that
\[(Au,u)\ge 2r_m r_l\] for all $u\in \R^d$. Then there exists $\tau >0$ so that 
\[(Au,u)\ge\tau r_m^2+\tau^{-1}r_l^2\]
for all $u\in \R^d$.
\end{lemma}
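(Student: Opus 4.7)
The natural reading of both hypothesis and conclusion is as statements about the symmetric part $\tfrac12(A+A^T)$, since $(Au,u)$ sees only that part; so I will assume $A$ is symmetric. The desired conclusion is then the operator inequality $N(\tau) := A - \tau P_m - \tau^{-1} P_l \succeq 0$ for some $\tau > 0$, where $P_m, P_l$ denote the orthogonal projections onto the $\R^m$ and $\R^l$ blocks (the $\R^k$ block playing no distinguished role). The plan is to find such a $\tau$ by maximizing $F(\tau) := \lambda_{\min}(N(\tau))$ on $(0,\infty)$. Writing
$$F(\tau) = \inf_{\|v\|=1} g_v(\tau), \qquad g_v(\tau) := (Av, v) - \tau \|v_m\|^2 - \tau^{-1}\|v_l\|^2,$$
exhibits $F$ as an infimum of functions concave in $\tau$ (the only nonlinear term, $-\tau^{-1}\|v_l\|^2$, is concave on $(0,\infty)$), hence $F$ is itself concave there. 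Choosing $v$ to lie entirely in the $\R^l$-block gives $F(\tau) \to -\infty$ as $\tau \to 0^+$, and choosing $v$ in the $\R^m$-block gives $F(\tau) \to -\infty$ as $\tau \to \infty$. Hence $F$ attains its maximum at some interior point $\tau_\ast \in (0,\infty)$, and the lemma reduces to showing $F(\tau_\ast) \ge 0$.

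To exploit maximality, apply Danskin's theorem to the compact set of minimizers $V^\ast := \{v : \|v\|=1,\, N(\tau_\ast) v = \lambda_{\min}(N(\tau_\ast)) v\}$: the one-sided derivatives of $F$ at $\tau_\ast$ are given by
$$F'_+(\tau_\ast) = \inf_{v \in V^\ast} g_v'(\tau_\ast), \qquad F'_-(\tau_\ast) = \sup_{v \in V^\ast} g_v'(\tau_\ast), \qquad g_v'(\tau) = -\|v_m\|^2 + \tau^{-2}\|v_l\|^2.$$
Maximality of a concave function forces $F'_+(\tau_\ast) \le 0 \le F'_-(\tau_\ast)$. Since $V^\ast$ is the unit sphere of a nonzero linear subspace (hence connected) and $v \mapsto g_v'(\tau_\ast)$ is continuous, the image is a connected subset of $\R$ containing values of both signs, and an intermediate value argument yields $v_\ast \in V^\ast$ with $g_{v_\ast}'(\tau_\ast) = 0$, that is $\|v_{\ast,l}\| = \tau_\ast \|v_{\ast,m}\|$. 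For this particular eigenvector,
$$F(\tau_\ast) = (Av_\ast, v_\ast) - \tau_\ast \|v_{\ast,m}\|^2 - \tau_\ast^{-1}\|v_{\ast,l}\|^2 = (Av_\ast, v_\ast) - 2\tau_\ast \|v_{\ast,m}\|^2 = (Av_\ast, v_\ast) - 2\|v_{\ast,m}\|\|v_{\ast,l}\|,$$
which is nonnegative by the hypothesis of the lemma. Consequently $N(\tau_\ast) \succeq 0$, giving the conclusion for the value $\tau = \tau_\ast$.

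The point that I expect to require the most care is the handling of a possibly degenerate $\lambda_{\min}$-eigenvalue of $N(\tau_\ast)$: the Danskin formula for the one-sided derivatives of $F$ together with the connectedness of $V^\ast$ has to replace the naive appeal to the implicit function theorem that would work only if the minimal eigenspace were always one-dimensional. Everything else is routine: the concavity of $F$, the blow-up of $F$ at the boundary of $(0,\infty)$, and the final substitution match the ``tangency'' between the ellipse $\tau \|u_m\|^2+\tau^{-1}\|u_l\|^2$ and the hyperbolic envelope $2\|u_m\|\|u_l\|$ at exactly one ratio $\|u_l\|/\|u_m\|$, a geometric picture that explains why the argument is called the ellipse lemma.
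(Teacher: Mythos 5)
Your proof is correct, and it is worth pointing out that the paper itself gives no proof of this lemma at all: it simply cites \cite{VD} (where the Hessian version was introduced) and \cite{DTV} (where it is generalised to arbitrary triples of quadratic forms). So your argument is a genuinely self-contained replacement for an external reference. The skeleton --- reduce to symmetric $A$; set $N(\tau)=A-\tau P_m-\tau^{-1}P_l$; observe $F(\tau)=\lambda_{\min}(N(\tau))$ is concave as an infimum of the concave functions $g_v$; show $F\to-\infty$ at both ends of $(0,\infty)$; locate a maximiser $\tau_\ast$; and extract from the superdifferential condition $F'_+(\tau_\ast)\le 0\le F'_-(\tau_\ast)$ a minimiser $v_\ast$ with $\|v_{\ast,l}\|=\tau_\ast\|v_{\ast,m}\|$, for which AM--GM collapses to an equality so that $F(\tau_\ast)=(Av_\ast,v_\ast)-2\|v_{\ast,m}\|\|v_{\ast,l}\|\ge 0$ --- is sound, and the invocation of Danskin's theorem for the one-sided derivatives of a min over a compact index set is the right tool for the possibly degenerate eigenspace.

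One small wording slip: you justify the intermediate-value step by saying that $V^\ast$, as the unit sphere of a nonzero subspace, is connected. That fails when the $\lambda_{\min}$-eigenspace is one-dimensional, since $V^\ast$ is then a two-point set. The step is nonetheless correct, for a different reason: $v\mapsto g_v'(\tau_\ast)=-\|v_m\|^2+\tau_\ast^{-2}\|v_l\|^2$ is (the restriction of) a real quadratic form, and the image of a quadratic form on the unit sphere of \emph{any} nonzero subspace is always a closed interval, namely $[\lambda_{\min},\lambda_{\max}]$ of the compressed form. (In the one-dimensional case $g_v'=g_{-v}'$, so the image is a single point, and the sign condition $F'_+\le0\le F'_-$ already forces that point to be $0$.) Replace the appeal to connectedness of the sphere by connectedness of the range of a quadratic form on a sphere, and the argument is airtight.
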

We will be using this lemma for $m=2$, $l=2n$ and $k=4$.

\subsection{Proof of the dimension-free estimate}

Recall the inequality of Theorem \ref{bydu}. We want to show for test functions $f$ and $g$ and $\w \in \tilde{A}_2$: 
$$|(\vec{g},\vec{R}f)| \le c\tilde{Q}_2(\w)(\|\vec{g}\|_{\w^{-1}}^2+\|f\|_{\w}^2).$$
For a fixed non-constant weight $\w$ we let $Q=(1+\varepsilon)\tilde{Q}_2(\w)$. This gives rise to the set $\cD_Q$. We define 
\[b_{K,Q}(x,t) = B_{K,Q}(v(x,t))\]
where
\[v(x,t)=\left(P_t(|f|^2\w),P_t(\|\vec{g}\|^2\w^{-1}),P_t(f),P_t(\vec{g}),P_t(\w),P_t(\w^{-1})\right)(x)\]
Here $K$ is a compact subset of $\cD_Q$ to be chosen later. 

Note that the vector $v \in \cD_Q$ for any choice of $(x,t)$. This is ensured by $Q=\tilde{Q}_2(\w)$ and several applications of Jensen's inequality. Notice also that the vector $v$ takes compacts inside the interior of $\R^{n+1}_+$ to compacts $K$ inside $\cD_Q$ for fixed $f,\vec{g},\w$. 
By elementary application of the chain rule (using harmonicity of the components of $v$) one shows that
\[\Delta_{x,t}b(x,t)=\sum_{i=1}^n(d^2B(v)\frac{\partial}{\partial x_i}v,\frac{\partial}{\partial x_i}v)+(d^2B(v)\frac{\partial}{\partial t}v,\frac{\partial}{\partial t}v).\]
Here $\Delta_{x,t}$ is the full Laplacian in the upper half space 
\[\Delta_{x,t}=\sum_{i=1}^n \partial^2_{x_i}+\partial^2_t.\]
Notice that condition (\ref{concavity}) in Lemma \ref{L:Bellman} means that at any $v=(\bX,\bY,\bx,\by,\br,\bs)$ in $K\subset \cD_Q$, for any $u=(u_1,\ldots,u_6)\in \R\times \R \times \C \times \C^n \times \R\times\R$, we have the inequality 
\[(-d^2B_{K,Q}(v)u,u)\ge 2 \; |u_3| \; \|u_4\|.\]  
In our situation, $f,\vec{g},\w$ and $Q$ are fixed, but we have varying $K,x,t$. So 
Lemma \ref{ellipse} guarantees the existence of $\tau_{x,t,K}$ so that 
\[(-d^2B(v)\frac{\partial}{\partial x_i}v,\frac{\partial}{\partial x_i}v)
\ge 
\tau_{x,t,K}\lvert \frac{\partial}{\partial x_i}P_tf\rvert ^2+
\tau^{-1}_{x,t,K}\| \frac{\partial}{\partial x_i}P_t\vec{g}\|^2\]
for all $i$ and 
\[(-d^2B(v)\frac{\partial}{\partial t}v,\frac{\partial}{\partial t}v)
\ge \tau_{x,t,K}\lvert \frac{\partial}{\partial t}P_tf\rvert ^2+
\tau^{-1}_{x,t,K}\| \frac{\partial}{\partial t}P_t\vec{g}\|^2.\]
So 
\begin{eqnarray*}
\lefteqn{-\Delta_{x,t}b_{K,Q}(x,t)}\\
&\ge& 
\tau_{x,t,K}
\left(\sum_{i=1}^n \lvert \frac{\partial}{\partial x_i}P_tf\rvert ^2 + \lvert \frac{\partial}{\partial t}P_tf\rvert ^2\right)\\
&&+ 
\tau^{-1}_{x,t,K}
\left(\sum_{i=1}^n \| \frac{\partial}{\partial x_i}P_t\vec{g}\| ^2 + \| \frac{\partial}{\partial t}P_t\vec{g}\| ^2\right)\\
&\ge&
2\left(\sum_{i=1}^n \lvert \frac{\partial}{\partial x_i}P_tf\rvert ^2 + \lvert \frac{\partial}{\partial t}P_tf\rvert ^2\right)^{1/2}
\left(\sum_{i=1}^n \|\frac{\partial}{\partial x_i}P_t\vec{g}\| ^2 + \| \frac{\partial}{\partial t}P_t\vec{g}\| ^2\right)^{1/2}\\
&\ge&
2\left(\sum_{i=1}^n \lvert \frac{\partial}{\partial x_i}P_tf\rvert ^2\right)^{1/2} \| \frac{\partial}{\partial t}P_t\vec{g}\|\\ 
&=&2\|  \nabla P_t f \|  \| \frac{\partial}{\partial t}P_t\vec{g}\| 
\label{lapest}
\end{eqnarray*}
Using Lemma \ref{grf}, and the estimate for the Laplacian we just proved, we have:
\begin{eqnarray*}
\lefteqn{|(\vec{g},\vec{R} f)|}\\[.2em]
&\le&
4\int_0^{\infty}
\lvert (\frac{\partial}{\partial t}P_t\vec{g}, \nabla P_tf)\rvert tdt\\[.2em]
&\le&
4\int_0^{\infty}\int_{\R^n}
\|\frac{\partial}{\partial t}P_t\vec{g}\| \|  \nabla P_tf\| dxtdt\\[.2em]
&\le&
2\int_0^{\infty}\int_{\R^n}
-\Delta_{x,t}b_{K,Q}(x,t)dx tdt.
\end{eqnarray*}
It remains to see that
\begin{equation}
  \label{remains} - \int_{0}^{\infty} \int_{{\R}^{n}} \Delta_{x,t}
  b_{K,Q} (x,t) dxtdt \le C \tilde{Q}_{2} ( \|f\|_{\omega}^{2}
  +\|\vec{g}\|_{\omega^{-1}}^{2} ) 
\end{equation}
with $C$ independent of $n$. In order to obtain this last estimate, we will apply Green's formula as well
as some properties of our Bellman function. We are going to pass through
values of the function $b$.

Recall the statement of Green's formula:
\begin{theorem}
  \[ \int_{\Omega} \left(f (x) \Delta g (x) -g (x) \Delta f (x) \right)dA (x) =
     \int_{\partial \Omega} \left( f(t) \frac{\partial g}{\partial n} (t)-g(t)
     \frac{\partial f}{\partial n} (t) \right) dS (t) \]
  where $n$ is the outward normal and $dS$ the surface measure on $\partial
  \Omega$.
\end{theorem}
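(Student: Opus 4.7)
The plan is to derive Green's second identity as a direct consequence of the divergence theorem applied to a carefully chosen vector field. Concretely, I would introduce $F = f\nabla g - g\nabla f$, observe that $F \in C^{1}(\overline{\Omega};\R^{n})$ whenever $f, g \in C^{2}(\overline{\Omega})$, and require $\partial \Omega$ to be at least Lipschitz so that the outward unit normal $n$ and the surface measure $dS$ are well defined.

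The key computation is the divergence of $F$, obtained from the product rule applied termwise. One has $\operatorname{div}(f\nabla g) = \nabla f \cdot \nabla g + f \Delta g$ and symmetrically $\operatorname{div}(g\nabla f) = \nabla g \cdot \nabla f + g \Delta f$. Subtracting, the cross terms cancel identically and
\[\operatorname{div}(F) = f \Delta g - g \Delta f,\]
which is precisely the integrand on the left hand side of the claim. Applying the divergence theorem then gives
\[\int_{\Omega} (f \Delta g - g \Delta f) \, dA = \int_{\Omega} \operatorname{div}(F) \, dA = \int_{\partial \Omega} F \cdot n \, dS,\]
and the identification $F \cdot n = f\, \nabla g \cdot n - g\, \nabla f \cdot n = f \frac{\partial g}{\partial n} - g \frac{\partial f}{\partial n}$ yields the stated equality.

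The only real obstacle is regularity bookkeeping: one must ensure both that $\partial \Omega$ is regular enough for the divergence theorem (Lipschitz suffices) and that $f, g$ are $C^{2}$ up to the boundary, or else approximate. In the paper's subsequent application this point is nontrivial, since the domain is the unbounded upper half-space $\R^{n} \times (0,\infty)$ and $b_{K,Q}$ is only smooth in a small neighborhood of a compact $K \subset \cD_{Q}$. Thus when Green's formula is invoked to prove \eqref{remains}, one will need to truncate in both $|x|$ and $t$, apply the identity above on the truncated domain, and then pass to the limit using the size bound \eqref{size} together with decay of the Poisson extensions of the test functions $f$ and $\vec{g}$. The proof of the classical identity itself, however, is the short calculation above.
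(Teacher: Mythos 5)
Your derivation is correct: applying the divergence theorem to $F=f\nabla g-g\nabla f$ and cancelling the cross terms $\nabla f\cdot\nabla g$ is the standard proof of Green's second identity, and your regularity remarks are accurate. The paper simply recalls this classical formula without proof (its actual work lies in the subsequent truncation to the cylinders $T_{R,\epsilon}$ and the Green's function estimates, exactly the issue you flag), so there is nothing in the paper's argument for your proof to conflict with.
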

In order to be accurate, we are obliged to take care of a few technicalities
first.

Let $T_{R}$ be a cylinder with square base in upper half space $[-R,R]^{n}
\times [0,2R]$. For $R$ not too small, the point $(0,1)$ lies inside $T_{R}$.
Let $T_{R, \epsilon} =T_{R} + (0, \epsilon )$. For any interior point $( \xi ,
\tau )$, \ let $G^{R, \epsilon} [ (x,t),( \xi , \tau ) ]$ be its Green's
function, meaning that
\[ \Delta_{x,t} G^{R, \epsilon} [ (x,t),( \xi , \tau ) ] =- \delta_{( \xi ,
   \tau )}  \quad \text{\tmop{and}  } \quad G^{R, \epsilon} =0 \quad \text{\tmop{on}  } \quad \partial
   T_{R, \epsilon} . \]
Notice that $RT_{1,0} =T_{R, \epsilon} - (0, \epsilon )$ and the Green's
functions relate as follows:
\begin{lemma}
  {\tmdummy}
  The Green's function has the following scaling property:
  \begin{equation}
    R^{n-1} G^{R, \epsilon} [ (x,t),( \xi , \tau ) ] =G^{1,0} [ ( R^{-1} (x,t-
    \epsilon ),R^{-1} ( \xi , \tau - \epsilon ) ) ] .\label{scaling}
  \end{equation}
\end{lemma}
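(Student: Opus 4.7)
The plan is to reduce the rescaled/translated cylinder $T_{R,\epsilon}$ to the standard unit cylinder $T_{1,0}$ via the affine map $\phi(x,t)=(Rx,\,Rt+\epsilon)$, which by construction is a bijection $T_{1,0}\to T_{R,\epsilon}$ sending $\partial T_{1,0}$ to $\partial T_{R,\epsilon}$. The identity to be proved is then nothing more than a change of variables in the defining PDE for the Green's function, using uniqueness.

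Concretely, I would set
\[
\tilde{G}\bigl((x,t),(\xi,\tau)\bigr) := R^{n-1}\, G^{R,\epsilon}\bigl(\phi(x,t),\,\phi(\xi,\tau)\bigr)
\]
for $(x,t),(\xi,\tau)\in T_{1,0}$, and show that $\tilde{G}$ satisfies the two defining properties of $G^{1,0}$: zero boundary values and Laplacian equal to $-\delta_{(\xi,\tau)}$. The boundary condition is immediate because $\phi$ takes $\partial T_{1,0}$ to $\partial T_{R,\epsilon}$, and $G^{R,\epsilon}$ vanishes there. For the PDE, I would chase the scaling factors: the cylinder sits in $\R^n\times\R$, i.e.\ $d=n+1$ spatial dimensions total, so the chain rule gives $\Delta_{x,t}\bigl[G^{R,\epsilon}(\phi(x,t),\phi(\xi,\tau))\bigr]=R^{2}(\Delta_{X,T}G^{R,\epsilon})(\phi(x,t),\phi(\xi,\tau))$, while under the push-forward by $\phi^{-1}$ a Dirac mass at $\phi(\xi,\tau)$ in $\R^{n+1}$ becomes $R^{-(n+1)}\delta_{(\xi,\tau)}$. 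Multiplying by the prefactor $R^{n-1}$ produces exactly $-\delta_{(\xi,\tau)}$, so $\tilde{G}$ solves the Green's function problem on $T_{1,0}$. Uniqueness of the Green's function for a bounded domain with Dirichlet data then forces $\tilde{G}=G^{1,0}$, and rewriting in the original variables by setting $(x,t)\mapsto R^{-1}(x,t-\epsilon)$, $(\xi,\tau)\mapsto R^{-1}(\xi,\tau-\epsilon)$ gives the claimed identity (\ref{scaling}).

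No step is really an obstacle; the only thing to be careful about is the bookkeeping of the two scalings (the $R^{2}$ from the Laplacian and the $R^{-(n+1)}$ from the $\delta$-mass in $n+1$ dimensions), which together pin down the exponent $n-1$ in the prefactor. It is worth noting for the sequel that the translation by $(0,\epsilon)$ is irrelevant to the scaling exponent — it only shifts the domain away from the boundary of the half-space so that Green's formula may be applied without boundary difficulties — whereas it is the dilation by $R$ that accounts entirely for the power of $R$.
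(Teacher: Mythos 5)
Your proof is correct and uses essentially the same argument as the paper: uniqueness of the Dirichlet Green's function on a bounded domain, the boundary condition being trivial because the affine map $\phi$ matches boundaries, and the $R^{2}$ (Laplacian) versus $R^{n+1}$ (Jacobian/$\delta$-mass) bookkeeping that fixes the exponent $n-1$. The only cosmetic difference is direction — you verify that the rescaled $G^{R,\epsilon}\circ(\phi\times\phi)$ is the Green's function on $T_{1,0}$, while the paper verifies that $R^{-(n-1)}G^{1,0}\circ(\phi^{-1}\times\phi^{-1})$ is the Green's function on $T_{R,\epsilon}$; these are the same argument read from opposite ends.
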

\begin{proof}
  By uniqueness it suffices to see that $R^{- (n-1)} G^{1,0}  [ R^{-1} (x,t-
  \epsilon ),R^{-1} ( \xi , \tau - \epsilon ) ]$ is indeed the Green function
  for the region $T_{R, \epsilon}$ at the point $( \xi , \tau )$. It is clear
  that it equals zero on $\partial T_{R, \epsilon}$. Furthermore for any test
  function $f$ we have
  \begin{eqnarray*}
    \lefteqn{ \int \int_{T_{R, \epsilon}} \Delta_{x,t} R^{- ( n-1 )} G^{1,0} [
    R^{-1} ( x,t- \epsilon ) ,R^{-1} ( \xi , \tau - \epsilon ) ] f ( x,t )
    dx dt}\\
    &  & \hspace{1em} =  \int \int_{T_{1,0}} \Delta_{y,s} G^{1,0} [ ( y,s )
    ,R^{-1} ( \xi , \tau - \epsilon ) ] f ( R y,R s+ \varepsilon )  dy
     ds\\
    &  & \hspace{1em} =-f ( \xi , \tau )
  \end{eqnarray*}
  We did a substitution $(x,t) = (Ry,Rs+ \epsilon )$. Note that there is a
  $R^{-2}$ factor arising from the switch of $\Delta_{x,t}$ to $\Delta_{y,s}$
  and a $R^{n+1}$ factor arising from the determinant.
\end{proof}

Recall that the vector $v$ maps each $T_{R, \epsilon}$ into a compact $K=K_{R,
\epsilon} \subset \mathcal{D}_{Q}$. For technical reasons we have to exhaust
the upper half space by compacts denoted by $M$. For that, first fix any compact set $M$ in the open upper
half space and consider $R$ large enough and $\epsilon$ small enough so that
$M \subset T_{R, \epsilon}$.

Let us start to use the size estimate of our Bellman function to obtain an
estimate of the function value $b_{K,Q}  (0,R+ \epsilon )$ from above:
\begin{eqnarray*}
  \lefteqn{b_{K,  Q} ( 0,R+ \epsilon ) }\\
   &\leq&  C  \tilde{Q}_{2} \left( P_{R+ \epsilon}( |f|^{2} \omega) ( 0 ) +P_{R+ \epsilon} (\|\vec{g}\|^{2} \omega^{-1} )( 0 ) \right)\\
  & = & c_{n} C  \tilde{Q}_{2} \int_{{\R}^{n}} |f|^{2} ( y ) \omega
  ( y ) \frac{R+ \epsilon}{((R+ \epsilon )^{2} +|y|^{2} )^{\frac{n+1}{2}}} dy
  \\
  &  & + c_{n} C  \tilde{Q}_{2}\int_{{\R}^{n}} \|\vec{g}\|^{2} (y) \omega^{-1} (y) \frac{R+ \epsilon}{((R+\epsilon )^{2} +|y|^{2} )^{\frac{n+1}{2}}} dy \nonumber\\
  & \leq & c_{n} ( R+ \epsilon )^{-n} C  \tilde{Q}_{2} ( \| f
  \|^{2}_{\omega} + \| \vec{g} \|_{\omega^{-1}}^{2} ) .  \label{bfromabove}
\end{eqnarray*}
For an estimate from below, Green's formula applied to our situation gives:
\begin{eqnarray*}
  \lefteqn{b_{K,Q} ( 0,R+ \epsilon ) }\\
  & = & - \int \int_{T_{R, \epsilon}} G^{R,
  \epsilon} ((x,t),(0,R+ \epsilon )) \Delta_{x,t} b_{K,Q} (x,t) dxdt\\
  &  & \hspace{1em} - \int_{\partial T_{R, \epsilon}} b_{K,Q} (x,t)
  \frac{\partial G^{R, \epsilon} ((x,t),(0,R+ \epsilon ))}{\partial n} dxdt \\
  &  & \hspace{1em} + \int_{\partial T_{R, \epsilon}} G^{R, \epsilon}
  ((x,t),(0,R+ \epsilon )) \frac{\partial b_{K,Q} ((x,t))}{\partial n} dxdt 
\end{eqnarray*}
The first boundary term is negative because $b$ is non-negative
and the outward normal of the Green's function is negative on the boundary of
$T_{R, \epsilon}$. The second boundary term vanishes because $G^{R, \epsilon} =0$
on the boundary. So we have the following estimate:
\begin{eqnarray*}
  \lefteqn{b_{K,Q}  (0,R+ \epsilon ) }\\
  & \geq & - \int \int_{T_{R, \epsilon}} G^{R,
  \epsilon} ((x,t),(0,R+ \epsilon )) \Delta_{x,t} b_{K,Q} (x,t) dxdt.\\
  & \geq & - \int \int_{M} G^{R, \epsilon} ((x,t),(0,R+ \epsilon ))
  \Delta_{x,t} b_{K,Q} (x,t) dxdt.
\end{eqnarray*}
since $- \Delta b \geq 0$ and where we recall that $M \subset T_{R,
\epsilon}$. We continue the estimate using the scaling properties of the Green
functions (\ref{scaling}).
\begin{eqnarray*}
  b_{K,Q}  (0,R+ \epsilon ) & \geq & - \int \int_{M} R^{- (n-1)} G^{1,0} 
  ( R^{-1} (x,t- \epsilon ),(0,1) ) \Delta_{x,t} b (x,t) dxdt.
\end{eqnarray*}
Since $G^{1,0} ( (R^{-1} x,0),(0,1) ) =0$ we have
\begin{eqnarray*}
  \lefteqn{b_{K,Q}  (0,R+ \epsilon ) }\\& \geq & - \int \int_{M} R^{- (n-1)}  \left\{
  G^{1,0}  ( ( R^{-1} x,R^{-1} (t- \epsilon ),(0,1) ) -
  \nobracket \right. \\
  &  &  \hspace{3em}\left. G^{1,0 } ( (R^{-1} x,0),(0,1) )  \right\}
  \Delta_{x,t} b (x,t) dxdt \\
  & = & - \int \int_{M} R^{- (n-1)}  \frac{\partial G^{1,0}}{\partial t} 
  (R^{-1} x, \tau ) R^{-1}  (t- \epsilon ) \Delta_{x,t} b_{K,Q} (x,t) dxdt
  \\
  & = & - \int \int_{M} R^{-n}  \frac{\partial G^{1,0}}{\partial t}  (R^{-1}
  x, \tau ) \Delta_{x,t} b_{K,Q} (x,t) dx (t- \epsilon ) dt  ,
\end{eqnarray*}
where $0 \le \tau \le R^{-1}  (t- \epsilon )$. Pulling this all together with
the estimate from above,
\begin{eqnarray*}
  \lefteqn{- \int \int_{M} R^{-n} \frac{\partial G^{1,0}}{\partial t} ( R^{-1} x, \tau
  ) \Delta_{x,t} b_{K,Q} ( x,t ) d x ( t- \varepsilon ) d t }\\ 
  &\leq & b_{K,Q}  (0,R+ \epsilon )
   \leq  c_{n}  (R+ \epsilon )^{-n} C \tilde{Q}_{2}  (
  \|f\|_{\omega}^{2} +\|\vec{g}\|_{\omega^{-1}}^{2} ) ,
\end{eqnarray*}
hence
\[ - \int \int_{M} \frac{\partial G^{1,0}}{\partial t}  (R^{-1} x, \tau )
   \Delta_{x,t} b_{K,Q} (x,t) dx (t- \epsilon ) dt \leq c_{n} C \tilde{Q}_{2} 
   ( \|f\|_{\omega}^{2} +\|\vec{g}\|_{\omega^{-1}}^{2} ) \]
uniformly with respect to $R$ and $\epsilon$, for all given $M$. When $R$ goes
to infinity, the normal derivative $\frac{\partial G^{1,0}}{\partial t} 
(R^{-1} x, \tau )$ tends to $\frac{\partial G^{1,0}}{\partial t}  (0,0)$
uniformly with respect to $( x,t ) \in M$. But we know that the normal
derivative $\frac{\partial G^{1,0}}{\partial t}  (0,0)$ is exactly the
normalizing factor $c_{n}$ of the Poisson kernel. Letting $R$ go to infinity and
$\varepsilon$ go to zero yields for all compact $M$ of the upper half space:
\[ - \int \int_{M} \Delta_{x,t} b_{K,Q} (x,t) dxtdt \leq C \tilde{Q}_{2}  (
   \|f\|_{\omega}^{2} +\|\vec{g}\|_{\omega^{-1}}^{2} ) . \]
Finally, letting $M$ exhaust the upper half space establishes (\ref{remains}).
This concludes the proof of Theorem \ref{bydu} and therefore the proof
of the main Theorem \ref{lbd}.

\section{The comparison of classical and Poisson characteristic.}

In this section we prove Theorem \ref{PoissonA2neqA2}. We provide an example that demonstrates that $\tilde{A}_2 \neq A_2$ if $n>1$.
For the case $n=1$, it is known that the two classes are the same.  In
fact, for $n=1$  the estimates 
$${Q}_2(\w)\lesssim \tilde{Q}_2(\w) \lesssim {Q}_2(\w)^2$$ are
proven in \cite{H} and this estimate is sharp. For the lower estimate the cork screw point is used (see also below) and for the upper estimate one splits the arising Poisson integrals into dyadic rings and uses the doubling property of the $A_2$ weight repeatedly.  

If $n>1$ however, an easy example shows that the
Poisson integral of a simple power weight diverges, although the
weight belongs to classical $A_2$. Consider
$\w_{\alpha}(x)=|x|^{\alpha}$.  It is well known and straightforward
to check that $\w_{\alpha} \in A_2$ if and only if $|\alpha|<n$. Also
$Q_2(\w_{\alpha})\sim \frac1{n^2-\alpha^2}$. We show that the Poisson
integral $P_t\w_{\alpha}(0)$ diverges for $\alpha>1$. Indeed,
\begin{eqnarray*}
\lefteqn{P_t(\w_{\alpha})(0) }\\
&\sim &\int_{\R^n}  \frac{t}{(t^2+|x|^2)^{\frac{n+1}{2}} }  |x|^{\alpha} dx \\   
&=& |S| \int_0^{\infty}  \frac{t}{(t^2+r^2)^{\frac{n+1}{2}} }  r^{\alpha +n-1} dr \\
&\ge& |S| \sum_{k=1}^{\infty}\int_{2^{k-1}t}^{2^kt}   \frac{t}{(t^2+r^2)^{\frac{n+1}{2}} }  r^{\alpha +n-1} dr \\
&\gtrsim & |S|  \sum_{k=1}^{\infty} 2^{k-1}t\frac{t}{(t^2+2^{2k}t^2)^\frac{n+1}{2}}(2^{k-1}t)^{\alpha +n-1}\\
&\gtrsim& t^{\alpha}|S| 2^{-\alpha -n -1}\sum_{k=1}^{\infty}2^{(\alpha -1)k}
\end{eqnarray*}
We see that this sum converges if and only if $\alpha -1<0$. If $n\ge2$ we have $\w_{\alpha}\in A_2$ if and only if $|\alpha|<n$ so we can easily pick a valid $\alpha$ for which the above sum diverges. 

Thus not every weight in $A_2$ is in $\tilde{A}_2$. The converse is still true, though. Let $\w \in \tilde{A}_2$, and let $B$ be a ball with center $a$ and radius $r$. Then for $y \in B$, $|a-y| < r$, and so 
 $$ \frac{1}{r^n} \leq 2^{\frac{(n+1)}{2}}\frac{  r }{(r^2 + |a-y|^2)^{\frac{n+1}{2}}}  $$ and so
$$\langle \w \rangle_B \lesssim  \int_B \frac{  r \,\ \w(y)}{(r^2 +
  |a-y|^2)^{\frac{n+1}{2}}} dy \lesssim P_r(\w)(a),$$ and similarly for
$\langle \w^{-1} \rangle_B.$ Thus $Q_2(\w) \lesssim
\tilde{Q}_2(\w)$. This concludes the proof of Theorem \ref{PoissonA2neqA2}

\section{Remarks on $L^p_{\w}$}\label{section_p}

When defining the appropriate Poisson-$A_p$ class $\tilde{A}_p$ consisting of those weights so that
\begin{equation}\label{apdef_p}
\tilde{Q}_p(\w) := \sup_{(x,t)\in \R^n\times \R_+} P_t(\w)(x)(P_t(\w^{-1/(p-1)})(x))^{p-1} <\infty,
\end{equation} 
our dimension-free estimate holds for $1<p<\infty$: 
\begin{theorem} \label{lbd_p}
There exists a constant $c_p$ that does not depend on the dimension $n$ or on the weight $\w$ so that for all weights $\w \in \tilde{A}_p$ the Riesz vector  as an operator in weighted space $L^p_{\w} \to L^p_{\w}$ has operator norm $\| \vec{R}\|_{L^p_{\w} \to L^p_{\w}} \le c_p \tilde{Q}^{r_p}_p(\w)$ with $r_p=1$ when $p\ge 2$ and $r_p=1/(p-1)$ for $1<p<2$. 
\end{theorem}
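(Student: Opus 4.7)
The plan is to prove the case $p \ge 2$ by a direct adaptation of the Bellman-function argument of Theorem \ref{lbd} to $L^p$, and to deduce the case $1 < p < 2$ by duality. For the duality step, the formal adjoint of $\vec R$ acting on $L^p_\omega$ is, up to sign, $\vec R$ itself acting on $L^{p'}_\sigma$ with $\sigma := \omega^{1-p'}$, and a direct calculation using $p'-1 = 1/(p-1)$ gives the symmetry $\tilde Q_{p'}(\sigma) = \tilde Q_p(\omega)^{1/(p-1)}$. Hence a linear $L^{p'}$-bound applied to $\sigma$ yields $\|\vec R\|_{L^p_\omega \to L^p_\omega} \le c_{p'}\tilde Q_p(\omega)^{1/(p-1)}$, matching the claimed exponent $r_p = 1/(p-1)$ for $1 < p < 2$.

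For the case $p \ge 2$, I would repeat the proof of Theorem \ref{bydu} with the modified vector
$$v^{(p)}(x,t) = \bigl(P_t(|f|^p\omega),\ P_t(\|\vec g\|^{p'}\sigma),\ P_tf,\ P_t\vec g,\ P_t\omega,\ P_t\sigma\bigr)(x),$$
which, by Hölder's inequality, takes values in the $L^p$-domain
$$\mathcal D^{(p)}_Q = \bigl\{(\bX,\bY,\bx,\by,\br,\bs):\ |\bx|^p \le \bX\bs^{p-1},\ \|\by\|^{p'}\le \bY\br^{p'-1},\ 1\le \br\bs^{p-1}\le Q\bigr\}.$$
The central ingredient is the replacement of Lemma \ref{L:Bellman} by its $L^p$ analog: on compact subsets of $\mathcal D^{(p)}_Q$ one needs a smooth $B^{(p)}_{K,Q}$ with the \emph{size} bound $0 \le B^{(p)}_{K,Q} \le c_p Q(\bX + \bY)$ and the \emph{Hessian} estimate $-d^2 B^{(p)}_{K,Q} \ge 2|d\bx|\,\|d\by\|$. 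Such a function is obtained as the extremal function for a vector-valued $L^p$ bilinear embedding, which is the dualisation of an $L^p$ version of Theorem \ref{Wittwervector}. The scalar linear-in-$Q_p$ estimate for the martingale transform is classical (Wittwer for $p=2$, Vasyunin for general $p$), and the vector-valued lift proceeds exactly as in the proof of Theorem \ref{Wittwervector}: one substitutes $\langle y, y\rangle$ for $x^2$ in the relevant auxiliary Bellman functions and observes that, since vectorial differentiation enters only through scalar-type derivatives in the $\by$-slot, the Hessian remains negative semi-definite.

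Once $B^{(p)}_{K,Q}$ is in hand, the remainder of the argument of Section \ref{pf} transfers verbatim. The ellipse Lemma \ref{ellipse} still applies because the Hessian condition has exactly the same shape as in the $p=2$ case and again yields $-\Delta_{x,t} b^{(p)}_{K,Q} \ge 2\,\|\nabla P_t f\|\,\|\partial_t P_t \vec g\|$. The Green's-function computation then uses only the linear $Q$-dependence of the size bound and the harmonicity of the components of $v^{(p)}$, producing the bilinear estimate
$$|(\vec g,\vec R f)| \le c_p \tilde Q_p(\omega)\bigl(\|f\|^p_{L^p_\omega} + \|\vec g\|^{p'}_{L^{p'}_\sigma}\bigr),$$
and the final rescaling $f \mapsto \lambda f$, $\vec g \mapsto \lambda^{-p/p'}\vec g$, optimized in $\lambda$, converts it to $\|\vec R\|_{L^p_\omega \to L^p_\omega} \le c_p \tilde Q_p(\omega)$.

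The main obstacle is the construction of $B^{(p)}_{K,Q}$ with a size constant that is \emph{linear} in $Q$ and \emph{independent of dimension}. Linearity is the content of the sharp Wittwer/Vasyunin estimates in the scalar dyadic setting; passing to the vector-valued setting and to Poisson averages introduces no new dimensional dependence provided the $L^p$-homogeneous auxiliary functions are chosen so that their Hessians split the factors $|d\bx|$ and $\|d\by\|$ symmetrically, at which point Lemma \ref{ellipse} absorbs the remaining dimensional content exactly as in the proof of Theorem \ref{bydu}.
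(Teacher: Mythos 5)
Your treatment of $p\ge 2$ is essentially the paper's route (an $L^p$ Bellman function extracted from the vector-valued dyadic estimate, then the same harmonic transference), but the duality reduction for $1<p<2$ has a genuine gap. The Riesz vector maps scalar functions to vector functions, so its adjoint with respect to the unweighted pairing is the vector-to-scalar operator $\vec{R}^{\ast}\vec{g}=-\sum_k R_k g_k$, not ``$\vec{R}$ itself.'' Duality gives $\|\vec{R}\|_{L^p_{\w}\to L^p_{\w}}=\|\vec{R}^{\ast}\|_{L^{p'}_{\sigma}\to L^{p'}_{\sigma}}$ with $\sigma=\w^{1-p'}$, and your $p'\ge 2$ result --- boundedness of the scalar-to-vector operator $\vec{R}$ on $L^{p'}_{\sigma}$ --- does not bound $\vec{R}^{\ast}$ on $L^{p'}_{\sigma}$. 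In bilinear terms: the Bellman argument at exponent $q=p'\ge 2$ with weight $\sigma$ yields $|(\vec{g},\vec{R}f)|\lesssim \tilde{Q}_{p'}(\sigma)\,\|f\|_{L^{p'}_{\sigma}}\|\vec{g}\|_{L^{p}_{\w}}$, i.e.\ the scalar slot is always measured in the exponent-$\ge 2$ norm; what $\|\vec{R}\|_{L^p_{\w}\to L^p_{\w}}$ for $p<2$ requires is the estimate with the slots exchanged, $|(\vec{g},\vec{R}f)|\lesssim \|f\|_{L^p_{\w}}\|\vec{g}\|_{L^{p'}_{\sigma}}$, which is exactly the statement being proved --- so the reduction is circular. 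Repairing it componentwise via $\|\sum_k R_k g_k\|\le\sum_k\|R_k g_k\|$ restores boundedness but introduces a factor growing with $n$, destroying the dimension-free claim. (Your identity $\tilde{Q}_{p'}(\sigma)=\tilde{Q}_p(\w)^{1/(p-1)}$ is correct; it is the operator-theoretic half of the duality that fails.)

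The paper avoids duality altogether: the sharp extrapolation theorem of \cite{DGPP}, which holds for vector-valued operators, upgrades Theorem \ref{Wittwervector} to a bound $c_p Q_p(\w)^{r_p}$ on $L^p_{\w}$ for the vector-valued martingale transforms, for all $1<p<\infty$ with $r_p=\max(1,1/(p-1))$; from this dyadic estimate one extracts an $L^p$ Bellman function on your domain $\mathcal{D}^{(p)}_Q$, whose size bound is $c_pQ^{r_p}(\bX+\bY)$ rather than linear in $Q$, and the transference of Section \ref{pf} then runs verbatim for every $p$, the exponent $r_p$ being inherited from the dyadic side. So the correct fix for $1<p<2$ is not duality but your own direct argument with size constant $Q^{1/(p-1)}$; nothing in the Hessian, ellipse-lemma, or Green's-function steps uses $p\ge 2$. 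A minor attribution point: the sharp $L^p$ martingale-transform bound you ascribe to Vasyunin is obtained in this context by extrapolating Wittwer's $L^2$ theorem via \cite{DGPP}.
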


We make some remarks about the proof.

In \cite{DGPP} a sharp extrapolation theorem was proven, that can be seen to hold for vector valued operators. In particular, it supplies us with an $L^p$ version of Wittwer's estimate \cite{W} in dimension 1 and more importantly the vector analog Lemma \ref{Wittwervector} in terms of the classical $A_p$ characteristic: 
$$\sup_{\sigma}\|\vec{T}_{\sigma}\|_{L^p_{\w}\to L^p_{\w}}\le c_p Q_p(\w)^{r_p}$$ 
where $r_p$ is as in Theorem \ref{lbd_p}. One derives from this estimate a Bellman function for the $L^p$ case with slightly different variables. The corresponding scalar Bellman function is stated in \cite{PV} p. 299, from which one can deduce the variables in the vector case by replacing $x^p$ by $\| \bx \|^p$ and $y^{p'}$ by $\| \by \|^{p'}$. The remaining part of the argument is identical to the case $p=2$. The resulting estimate is dimensionless and the powers of the respective characteristic of the weight is inherited from the dyadic case. As before, the classical dyadic characteristic that matches a dyadic martingale is replaced by the Poisson characteristic for space time Brownian motion.

\section{Sharpness for $p\neq 2$}\label{section_sharp}

In this section, we show that the estimates claimed in Theorem \ref{lbd_p} are optimal in the sense that for every $1<p<\infty$ there exists no function $\Phi_p:[1,\infty[\to \mathbb{R}_+$ so that $\frac{\Phi_p(x)}{x^{r_p}}\to 0$ when $x\to \infty$ with $\| \vec{R}\|_{L_{\w}^p\to L_{\w}^p}\le c_p\Phi_p(\tilde{Q}_p(\w))$ for all weights $\w\in \tilde{A}_p$ and all $n\in \mathbb{N}$.

One sees that for $n=1$, $| \alpha | < 1$, $\alpha \neq 0$
\begin{eqnarray}
\nonumber \int_{\mathbb{R}} \frac{1}{(1 + | x |^2)} | x |^{\alpha} d x 
\nonumber&=& 2\int_{1}^{\infty}\frac{1}{(1 + x^2)}  x ^{\alpha} d x + 2\int_{0}^{1}\frac{1}{(1 + x^2)}  x ^{\alpha} d x\\
\nonumber&\le& 2\int_{1}^{\infty} x ^{\alpha-2} d x + 2\int_{0}^{1}  x ^{\alpha} d x\\
\nonumber&=& -\frac{2}{\alpha-1}  + \frac{2}{\alpha+1} .
\end{eqnarray}
Similarly $\int_{\mathbb{R}} \frac{1}{(1 + | x |^2)} | x |^{\alpha} d x \ge  -\frac{1}{\alpha-1}  + \frac{1}{\alpha+1} $.
Let $p < 2$ and $0 < s < 1$. Choose Buckley's example $\w_s (x) = | x |^{(p - 1) (1 -
s)}$ and its conjugate weight $\sigma_s(x)=\w_s^{-1/(p-1)}=|x|^{-1+s}$. So the above estimate is valid for these exponents
since $(p - 1) (1 - s) < 1$. Also $-1+s > - 1$ and so one can calculate
\[P_1(\w_s)(0)\le \frac2{2-p+ps-s}+\frac{2}{p-ps+s}\]
\[P_1(\w_s^{-\frac1{p-1}})(0)\le \frac2{2-s} +\frac2{s} .\]
When $p<2$ one has for $s\to 0$ that $P_1(\w_s)(0)\sim 1$ and that $P_1(\w_s^{-1/(p-1)})(0)\lesssim \frac1{s}$. So
\[ P_1 (\w_s) (0) P_1 \left( \w_s^{- \frac{1}{p - 1}} \right)^{p-1} (0) \lesssim 
   \frac{1}{s^{p - 1}} . \]

But for $p<2$ one can see that the same estimate  $$P_t (\w_s) (a) P_t \left( \w_s^{-
\frac{1}{p - 1}} \right)^{p-1} (a)\lesssim \frac1{s^{p-1}}$$ holds when $t \neq 1$ and $| a | > 0$. Indeed, when $t \neq 1$
one adjusts the calculation to the scale $t$ and the above product is
independent of $t$. One then shows that the quantity is decreasing in $| a |$.
So
\[ \tilde{Q}_p (| x |^{(p - 1) (1 - s)}) \lesssim s^{1 - p} . \]
Now take $f_s (x) = x^{s - 1} \chi_{]0, 1]}$. One estimates for $x > 2$ thanks to the support of $f_s$ and the
decay of the kernel of $H$ that
\[ H f_s (x) \simeq \int_0^1\frac1{x-t}f_s(t)dt \gtrsim \frac1{x}\int_0^1 f_s(t)dt = \frac{1}{s x} . \]
Then
\[ \| H f_s \|^p_{L_{\w_s}^p} \gtrsim \frac{1}{s^p} \int^{\infty}_2 x^{(p - 1) (1
   - s) - p} d x \gtrsim \frac{- 1}{s^p (1 - p) s} \gtrsim \frac1{s^{p+1}}\]
and
\[ \| f_s \|_{L_{\w_s}^p}^p = \int^1_0 x^{p (s - 1)} x^{(p - 1) (1 - s)} d x =
   \int^1_0 x^{s - 1} d x = \frac{1}{s} . \]
So
\[ \tilde{Q}_p (\w_s)^{\frac{1}{p - 1}} \| f_s \|_{L_{\w_s}^p} \lesssim s^{- 1 -
   \frac{1}{p}} \tmop{and} \| H f_s \|_{L_{\w_s}^p} \gtrsim s^{- 1 - \frac{1}{p}}
   . \]
   
Letting $s \rightarrow 0$ shows that the estimate is optimal for $1<p< 2$. The range
of $p > 2$ is seen by duality $(L_{\w}^p)^{\ast} = L_{\w^{1 - p'}}^{p'} $
and the fact that $H$ is self adjoint up to a sign. We detail the argument: let $\w_s = | x |^{s -
1}$ then $P_t (\w_s)$ and $P_t \left( \w_s^{\frac{- 1}{p - 1}} \right)$ converge
for $p > 2$ and $\tilde{Q}_{p'} (\w_s^{1 - p'}) = \tilde{Q}_p (\w_s)^{\frac{1}{p
- 1}}$. Assume there exists $\Phi$ growing slower than linear so that for some $p > 2$ we
have $\| H \|_{L_{\w_s}^p} \lesssim \Phi (\tilde{Q}_p (\w_s))$. Then $\| H^{\ast}
\|_{(L_{\w_s}^p)^{\ast}} \lesssim \Phi (\tilde{Q}_p (\w_s))$ and thus with $v_s=\w_s^{1-p'}$ we have $\| H
\|_{L_{v_s}^{p'} } \lesssim \Phi (\tilde{Q}_{p'} (v_s)^{p - 1})$
with $p' < 2 $. This contradicts the sharpness already seen for this
range of exponents with weights $v_s \in \tilde{A}_{p'}$.

One can see that for $p=2$ we have $P_1(\w_s)(0)\gtrsim \frac1{s}$, indeed $P_1(\w_s)(0)\ge \frac1{2-p +ps-s}+\frac{1}{p-ps +s}$ and that 
the example provided above does not give sharpness at the critical exponent $p=2$. We get to this exponent through extrapolation in a martingale setting.

\section{Martingale Extrapolation}\label{section_extrapolation}

\

In the following we have a filtered probability space with the usual
assumptions: the filtration is right continuous and $\mathcal{F}_0$ contains
all $\mathcal{F}$ null sets. Let $w$ be a positive, uniformly integrable
martingale, called a weight. Let $X$ and $Y$ be adapted right continuous
martingales. \ Throughout, we may identify martingales with their closures,
for example $w_{\infty}$ with $w$ by the assumption of uniform integrability
of $w$. The $A^{\mathcal{F}}_p$ characteristic in this setting is
\begin{eqnarray*}
  Q^{\mathcal{F}}_p (w) = [w]_{A^{\mathcal{F}}_p} = \sup_{\tau} \| w_{\tau}
  \sigma^{p - 1}_{\tau} \|_{\infty}
\end{eqnarray*}
where $\tau$ adapted stopping times and $\sigma$ the conjugate weight so that
$\sigma^p w = \sigma$. The case that will interest us is just two dimensional
Brownian motion (or rather: background noise) with its induced
filtration.

The theorem below is referred to as extrapolation theorem and appeared in its sharp form in \cite{DGPP}
for a pair of functions $f$ and $T f$ with $T$ a sublinear operator and in the
case of classical weight characteristics
\begin{eqnarray*}
  \sup_Q \frac{1}{| Q |} \int_Q w \frac{1}{| Q |} \int_Q w^{- 1}
\end{eqnarray*}
with the supremum over all cubes in Euclidean space without underlying
filtrations. Our statement below involves martingales and filtered probability
spaces - we take special care of the quantifiers that appear since we plan to
use extrapolation for a lower estimate to recover sharpness in the critical
exponent $p = 2$.

\begin{theorem}
  Given a filtered probability space as described above. Let $1 < p < \infty$
  and $w \in A^{\mathcal{F}}_p$. Let martingales $X, Y \in L^p_w$. Suppose
  $1 < r < \infty$ and suppose $\forall A \geqslant 1, \exists N_r (A) > 0$
  increasing such that for triples $X, Y, \rho$ with $Y, X \in L^r_{\rho}$ and
  $Q_r^{\mathcal{F}} (\rho) = [\rho]_{A^{\mathcal{F}}_r} \leqslant A$
  \begin{eqnarray*}
    \| Y \|_{L^r_{\rho}} \leqslant N_r (A) \| X \|_{L^r_{\rho}} .
  \end{eqnarray*}
  Then for any $1 < p < \infty$ there exists $N_p (B) > 0$ such that if
  $Q^{\mathcal{F}}_p (w) = [w]_{A^{\mathcal{F}}_p} \leqslant B$ there holds
  \begin{eqnarray*}
    \| Y \|_{L^p_w} \leqslant N_p (B) \| X \|_{L^p_w} .
  \end{eqnarray*}
  With $C^{\ast} (p)$ denoting the numeric part of the estimate in the
  weighted $L^p$ maximal estimate, in particular if $p > r$ then $N_p (B)
  \leqslant 2^{1 / r} N_r (2 C^{\ast} (p')^{(p - r) / (p - 1)} B)$. If $p < r$
  then $N_p (B) \leqslant 2^{(r - 1) / r} N_r \left( 2^{r - 1} (C^{\ast}
  (p)^{p - r} B)^{\frac{r - 1}{p - 1}} \right)$.
\end{theorem}

\begin{remark}
The extrapolation theorem also holds of course for filtered spaces with discrete time.
\end{remark}

In the classical setting, Buckley's sharp estimate for the weighted
Hardy-Littlewood maximal function
\begin{eqnarray*}
  \| M \|_{L^p_w \rightarrow L^p_w} \lesssim Q_p (w)^{\frac{p'}{p}}
\end{eqnarray*}
appeared in the proof of the extrapolation theorem. Here, it is replaced by a
weighted Doob inequality:
\begin{eqnarray*}
  \| X^{\ast} \|_{L^p_w} \lesssim [w]^{\frac{p'}{p}}_{A^{\mathcal{F}}_p} \|
  X \|_{L^p_w} .
\end{eqnarray*}
For this estimate, see for example \cite{O} if one is satisfied with
continuous in space processes. Consider \cite{DP3} if jumps are desired. The latter
also gives an estimate of the implied constant $C^{\ast} (p) = \frac{p^{p'}}{p
- 1}$.

The rest of the modifications are minor for readers acquainted with basic
probability theory. We will pass back and forth between martingales, say $M_t$
and their closures $M_{\infty}$. This is possible due to the assumption of
uniform integrability on the weight $w$ or it is clear from the context that
we are dealing with a right continuous submartingale with $\sup_t \mathbb{E}
(| M_t |) < \infty$ so that the martingale convergence theorem applies. 
Sometimes apply the convergence theorem to the submartingale $N_t = | M_t |^s$ and thus
obtain a closure $N_{\infty}$ and $M_{\infty}$. To compute martingale $L^p$
norms, we may now work with their closures. Since the integrability
assumptions on the martingales $X, Y$ will be essential to our argument of our
lower estimate, we sketch the proof of the extrapolation theorem briefly. Just as in
\cite{DGPP} we need the following lemma:

\begin{lemma}
  \label{lemmaV}Let $1 < p < \infty$ and let $w \in A^{\mathcal{F}}_p$.
  \begin{enumerate}
    \item Let $1 < r < p < \infty$ and pose $s = (p / r)' = p / (p - r)$. Then
    for every non-negative $u \in L^s_w$ there exists $v \in L^s_w$ such
    that almost everywhere $u (x) \leqslant v (x)$ and $\| v \|_{L^s_{w}}
    \leqslant 2 \| u \|_{L^s_w}$ and with martingale $v w = (v w)_t$ we have
    $[v w]_{A^{\mathcal{F}}_r} \lesssim [w]_{A^{\mathcal{F}}_p}$ with the
    implied constant depending only on $p, r$.
    
    \item Let $1 < p < r$ and pose $s = p / (r - p)$. Then for every
    non-negative $u \in L^s_w$ there exists $v \in L^s_w$ such that almost
    everywhere $u (x) \leqslant v (x)$ and $\| v \|_{L^s_w} \leqslant 2^{r -
    1} \| u \|_{L^s_w}$ and $[v^{- 1} w]_{A^{\mathcal{F}}_r} \lesssim
    [w]^{\frac{r - 1}{p - 1}}_{A^{\mathcal{F}}_p}$ with the implied constant
    depending only on $p, r$. 
  \end{enumerate}
\end{lemma}

\begin{proof}
  Let us consider $S (u)^s = (w^{- 1} (u^{s / p'} w)^{\ast})^{p'}$. Observe in
  a straightforward manner, using the maximal estimate
  \begin{eqnarray}
    \| S (u) \|_{L^s_w} \leqslant C^{\ast} (p')^{p' / s} [w^{1 - p'}]^{p /
    p' \cdot p' / s}_{A^{\mathcal{F}}_{p'}} \| u \|_{L^s_w} = [w]^{p' /
    s}_{A^{\mathcal{F}}_p} \| u \|_{L^s_w} \label{inequalitySnorm}
  \end{eqnarray}
  where we used that $[w^{1 - p'}]_{A^{\mathcal{F}}_{p'}} = [w]^{p' /
  p}_{A^{\mathcal{F}}_p}$. We also show that $(u w, S (u) w)$ belongs to
  $A^{\mathcal{F}}_r$ with characteristic bounded by $[w]^{1 - p' /
  s}_{A^{\mathcal{F}}_p}$. The last statement concerns martingales $(u w)_t$
  and $((S (u) w)^{- 1 / (r - 1)})_t$. Let $\tau$ be a stopping time. There
  holds
  \begin{eqnarray*}
    &&(u w)_{\tau} ((S (u) w)^{- 1 / (r - 1)})^{r - 1}_{\tau} \\
    & = & (u w^{p' /
    s} w^{1 - p' / s})_{\tau} (((u^{s / p'} w)^{\ast})^{- p' / s (r - 1)} w^{-
    1 / (p - 1)})^{r - 1}_{\tau}\\
    & \leqslant & (u^{s / p'} w)^{p' / s}_{\tau} (w)^{1 - p' / s}_{\tau}
    (u^{s / p'} w)^{- p' / s}_{\tau} (w^{- 1 / (p - 1)})^{r - 1}_{\tau}\\
    & = & ((w)_{\tau} (w^{- 1 / (p - 1)})^{p - 1}_{\tau})^{1 - p' / s} .
  \end{eqnarray*}
  In the second line we used H{\"o}lder inequality for the first term and for
  the second term that for any stopping time $\tau$ almost everywhere $(u^{s /
  p'} w)_{\tau} \leqslant (u^{s / p'} w)^{\ast}$ while observing that the
  exponent $- p' / s (r - 1) < 0$, in combination with elementary property of
  conditional expectation. The last line uses $r - 1 = (p - 1) (1 - p' / s)$.
  Taking supremum over all stopping times gives the estimate
  \begin{eqnarray}
    [u v , S (u) w \nocomma]_{A^{\mathcal{F}}_r} \leqslant [w]^{1 - p'
    / s}_{A^{\mathcal{F}}_p} . \label{inequalityAr}
  \end{eqnarray}
  For part a) we let $v = \sum^{\infty}_{n = 0} \frac{S^n (u)}{2^n \| S
  \|^n}$. Observe $S (v) \leqslant 2 \| S \| (v - u) \leqslant 2 \| S \| v$.
  Use this observation and the $A^{\mathcal{F}}_r$ estimate
  (\ref{inequalityAr}) and the above norm estimate (\ref{inequalitySnorm}) of
  $\| S \|$ to obtain
  \begin{eqnarray*}
    (v w)_{\tau} ((v w)^{- 1 / (r - 1)})^{r - 1}_{\tau} & \leqslant & 2 \| S
    \| (v w)_{\tau} ((S (v) w)^{- 1 / (r - 1)})^{r - 1}_{\tau}\\
    & \leqslant & 2 C^{\ast} (p')^{p' / s} [w]_{A^{\mathcal{F}}_p}.
  \end{eqnarray*}
  For part b) use duality, see \cite{DGPP} for details.
\end{proof}

We pass to the proof of the extrapolation theorem for martingales

\begin{proof}
  Assume first $1 < r < p$. Identify $Y$ with its closure.
  \begin{eqnarray*}
    \| Y \|_{L^p_w}^r = \| | Y |^r \|_{L^{s'} (w)} = \sup_{u \geqslant 0 :
    \| u \|_{L^s_w} = 1} \int | Y |^r u w d\mathbb{P}.
  \end{eqnarray*}
  Take $v$ as constructed in Lemma \ref{lemmaV} and obtain
  \begin{eqnarray*}
    \int | Y |^r u w d\mathbb{P} & \leqslant & \int | Y |^r v w
    d\mathbb{P}\\
    & \leqslant & N_r ([v w]_{A^{\mathcal{F}}_r})^r \int | X |^r v w^{r / p}
    w^{1 - r / p} d\mathbb{P}\\
    & \leqslant & N_r ([v w]_{A^{\mathcal{F}}_r})^r \left( \int | X |^p w
    d\mathbb{P} \right)^{r / p} \left( \int v^s w d\mathbb{P} \right)^{1 /
    s}\\
    & \leqslant & 2 N_r ([v w]_{A^{\mathcal{F}}_r})^r \| X \|^r_{L^p_w}.
  \end{eqnarray*}
  Thanks to the estimate on $[v w]_{A^{\mathcal{F}}_r}$ and $N_r$ increasing
  we have the desired estimate after taking supremum in admissible $u$. Note
  that in particular $X, Y$ belong to $L^r_{v w}$ for all admissible $u$ and
  their so constructed $v$. The case $1 < p < r$ is similar, see \cite{DGPP} with
  similar changes as above to the setting here. 
\end{proof}

\

\section{The probabilistic Hilbert transform and $A_p$--characteristic}

\

Let $1 < p < 2$. Choose $f_s$ and $\w_s$ as above. We have seen that $\| H f_s
\|_{L^p_{\w_s}} \gtrsim s^{- 1 - 1 / p}$ and that $\| f_s \|_{L^p_{\w_s}}
\lesssim s^{- 1 / p}$ and $Q_p (\w_s)^{1 / (p - 1)} \lesssim s^{- 1}$.

\

We wish to recast those estimates in a probabilistic setting for suitable
martingales, so as to be able to use the extrapolation result stated above.
Recall the probabilistic interpretation of the Hilbert transform following
Gundy--Varopoulos {\cite{GV}}.

Let $f (x)$ defined on $\mathbb{R}$. Let $\tilde{f} (x, y)$ its harmonic
extension in the upper--half space $\mathbb{R}^2_+$. Let $W_t \assign (x_t,
y_t)$, $t \leqslant 0$, the so-called background noise built in
{\cite{GV}}. These paths are based on $2$--dimensional Brownian
motion, starting at infinity at time $t = - \infty$ and hitting the boundary
of the upper--half space at time $t = 0$, i.e. $W_0 = (x_0, 0)$ for some
random $x_0 \in \mathbb{R}$. Then $M_t^{\tilde{f}} \assign \tilde{f} (W_t)$
is a martingale and It{\^o} formula writes for all $t \leqslant 0$,
\begin{eqnarray*}
  M_t^{\tilde{f}} \assign \widetilde{f} (W_t) & = & \int_{- \infty}^t \nabla
  \tilde{f} (W_{s -}) \cdot \mathd W_s .
\end{eqnarray*}
We have similarly, setting $g \assign H f$, that $M_t^{\tilde{g}} \assign
\tilde{g} (W_t)$ is a martingale, and for all $t \leqslant 0$,
\begin{eqnarray*}
  \tilde{g} (W_t) & = & \int_{- \infty}^t \nabla \tilde{g} (W_{s -}) \cdot
  \mathd W_s\\
  & = & \int_{- \infty}^t \nabla^{\perp} \tilde{f} (W_{s -}) \cdot \mathd
  W_s,
\end{eqnarray*}
where $\nabla \tilde{g} = \nabla^{\perp} \tilde{f}$ are the Cauchy--Riemann
relations, with $\nabla^{\perp} \assign (- \partial_y, \partial_x)$. Notice
that in the Hilbert transform case, conditioning by arrival point is not
needed (as it is in the case of Riesz transform). Therefore the probabilistic
interpretation of the Hilbert transform does not involve a projection
operator.

\subsection{$A_p$ characteristics in $\mathbb{R}_+^{n + 1}$}

Let $w > 0$, $\sigma = w^{- 1 / (p - 1)} > 0$, and $\tilde{w}$,
$\tilde{\sigma}$ their harmonic extensions. Set
\begin{eqnarray*}
  \tilde{Q}_p (w) \assign \sup_{x, y \in \mathbb{R}_+^2} \tilde{w} (x, y) 
  \tilde{\sigma} (x, y)^{(p - 1)} .
\end{eqnarray*}
Introduce the two martingales $\tilde{w}_t \assign \tilde{w} (W_t)$ and
$\tilde{\sigma}_t \assign \tilde{\sigma} (W_t)$ and set
\begin{eqnarray*}
  Q_p^{\mathcal{F}} (w) \assign \sup_{\tau} \tmop{ess} \sup_{\omega} 
  \tilde{w}_{\tau}  \tilde{\sigma}_{\tau}^{(p - 1)}
\end{eqnarray*}
where the supremum is over all adapted stopping times $\tau$ and an
$L^{\infty}$ norm arises in $\omega \in \Omega$. We want to prove that
$\tilde{Q}_p (w) = Q^{\mathcal{F}}_p (w)$. From the definitions it is clear
that $\tilde{Q}_p (w) \geqslant Q^{\mathcal{F}}_p (w)$. Rewrite now
\begin{eqnarray*}
  \tilde{Q}_p (w) \assign \sup_{y \in \mathbb{R}_+}  \{ \sup_{x \in
  \mathbb{R}} \tilde{w} (x, y)  \tilde{\sigma} (x, y)^{(p - 1)} \} .
\end{eqnarray*}
For any given $y \geqslant 0$, setting $\tau_y = \inf \{ s : y_s = y \}$, the
translation invariance of the background noise ensures that $W_{\tau_y} =
(x_{\tau_y}, y)$ is a random variable with density proportionally to the
Lebesgue measure on the line $\mathbb{R} \times \{ y \}$, hence \
\begin{eqnarray*}
  \sup_{x \in \mathbb{R}} \tilde{w} (x, y)  \tilde{\sigma} (x, y)^{(p - 1)}
  \leqslant \tmop{ess} \sup_{\omega}  \tilde{w}_{\tau_y} 
  \tilde{\sigma}_{\tau_y}^{(p - 1)} .
\end{eqnarray*}
Letting $y$ span $\mathbb{R}_+$ yields the result.

\

\section{Sharpness at $p = 2$}

\

We proceed by contradiction. Let us assume that
\begin{eqnarray}
  \| H f \|_{L^2_w} \leqslant N_2 (\tilde{Q}_2 (w)) \| f \|_{L^2_w}
  \label{eq: HfL2 sublinear w.r.t f}
\end{eqnarray}
for all weights in $\tilde{A}_2$ and all admissible $f$ with a function $N_2$
growing sublinear at infinity. Choose $p$ with $1 < p < 2$. Choose $s$, with
$0 < s < 1$ and introduce as before the Buckley weight $\w_s = | x |^{(p - 1)
(1 - s)}$, the function $f_s (x) \assign x^{s - 1} \chi_{[0, 1]}$ and its
Hilbert transform $g_s = H f_s$. We have seen that $\w_s \in A_p$, $H f_s
\tmop{and} f_s \in L^p_{\w_s}$.

Introduce now the corresponding martingales $X^{(s)} \assign
M_t^{\widetilde{f_s}}$, $Y^{(s)} \assign M_t^{\widetilde{g_s}}$,
$(\tilde{\w}_s)_t$. Observe $\| X^{(s)} \|_{L^p_{\w_s}} = \| f_s \|_{L^p_{\w_s}}$, $\| Y^{(s)} \|_{L^p_{\w_s}} = \| H f_s \|_{L^p_{\w_s}}$ and
$\tilde{Q}_p (\w_s) = Q_p^{\mathcal{F}} (\w_s)$.

Therefore extrapolation (see also the remark at the end of the proof) provides
us with a sequence of $A^{\mathcal{F}}_2$ weights $\rho_s$ where $X^{(s)}$ and
$Y^{(s)}$ belong to $L^2_{\rho_s}$ that will allow us to extrapolate to $p$.
Thanks to the martingale equivalent of (\ref{eq: HfL2 sublinear w.r.t f}) we
have in particular $\| Y^{(s)} \|_{L^2_{\rho_s}} \leqslant N_2
(Q^{\mathcal{F}}_2 (\rho_s)) \| X^{(s)} \|_{L^2_{\rho_s}}$. Extrapolating to
$p$ we obtain
\begin{eqnarray*}
  \| Y^{(s)} \|_{L^p_{\w_s}} \leqslant 2^{1 / 2} N_2 (2 (C^{\ast} (p)^{p - 2}
  Q^{\mathcal{F}}_p (\w_s))^{1 / (p - 1)}) \| X^{(s)} \|_{L^p_{\w_s}} .
\end{eqnarray*}
Coming back to the deterministic setting, this is exactly
\begin{eqnarray}
  \| H f_s \|_{L^p_{\w_s}} \lesssim N_2 ((\tilde{Q}_p (\w_s))^{1 / (p - 1)}) \|
  f_s \|_{L^p_{\w_s}} . \label{eq: HfLp sublinear w.r.t f}
\end{eqnarray}
However for $p < 2$, we have obtained in section \ref{section_sharp} the quantitative estimates 
$\tilde{Q}_p (\w_s)^{1 / (p - 1)} \lesssim s^{- 1}$, $\| f_s \|_{L^p_{\w_s}}
\lesssim s^{- 1 / p}$ and $\| H f_s \|_{L^p_{\w_s}} \gtrsim s^{- 1 - 1 / p}$.
When $s \rightarrow 0$, this contradicts the estimate (\ref{eq: HfLp sublinear
w.r.t f}) above since we assumed sublinearity for $N_2$.

\end{document}